\documentclass[11pt]{amsart}
\hoffset=-0.5in
\textwidth=6in

\usepackage{amsmath,amssymb,amsthm}
\usepackage{amsfonts}
\usepackage[mathscr]{eucal}
\pagestyle{plain}



\usepackage{amsmath,amssymb,amsthm}
\usepackage{amsfonts}
\usepackage{color}
\usepackage[mathscr]{eucal}
\pagestyle{plain}

\newtheorem{theorem}{Theorem}[section]
\newtheorem{lemma}[theorem]{Lemma}

\newtheorem{corollary}[theorem]{Corollary}

\theoremstyle{definition}

\theoremstyle{remark}

\numberwithin{equation}{section}


\newcommand{\rr}{{\mathbb R}}

\def\R{{\mathbb R}}
\def\N{{\mathbb N}}

\def\l{{\langle}}
\def\r{\rangle}
\def\t{\tau}
\def\k{\kappa}
\def\a{\alpha}

\def\eps{\varepsilon}

\def\Re {{\rm Re}\,}

\def\E{{\mathbb E}}
\def\P{{\mathbb P}}

\def\k{{\bf k}}
\def\n{{\bf n}}
\def\m{{\bf m}}
\def\0{{\bf 0}}
\def\s{{\bf s}}
\def\t{{\bf t}}




\begin{document}

\title{\bf Strong laws of large numbers for arrays of random variables
and stable random fields}

\author{Erkan Nane}
\address{Erkan Nane, Department of Mathematics and Statistics,
Auburn University, Auburn, AL 36849}
\email{nane@auburn.edu}
\urladdr{http://www.auburn.edu/$\sim$ezn0001}

\author{Yimin Xiao}
\address{Yimin Xiao, Department Statistics and Probability,
Michigan State University, East Lansing, MI 48824}
\email{xiao@stt.msu.edu}
\urladdr{http://www.stt.msu.edu/$\sim$xiaoyimi}
\thanks{Research of Y. Xiao was partially supported by
grants from the National Science Foundation.}

\author{Aklilu Zeleke}
\address{Aklilu Zeleke, Department of Statistics and Probability,
Michigan State University, East Lansing, MI 48824}
\email{zeleke@stt.msu.edu}

\begin{abstract}
Strong laws of large numbers are established for random fields
with weak or strong dependence. These limit theorems are
applicable to random fields with heavy-tailed distributions
including fractional stable random fields.

The conditions for SLLN are described in terms of the $p$-th
moments of the partial sums of the random fields, which are
convenient to verify. The main technical tool in this paper
is a maximal inequality for the moments of partial sums of
random fields that extends the technique of Levental, Chobanyan
and Salehi \cite{chobanyan-l-s} for a sequence of random variables
indexed by a one-parameter.

\end{abstract}

\keywords{Strong law of large numbers, maximal moment inequality,
fractional stable random field.}

\maketitle


\section{Introduction}

Many authors have studied strong laws of large numbers (SLLN) for arrays
of random variables or, more generally, random fields with certain dependence
structures. For example, Klesov \cite{K82,Klesov03} proved a strong law of
large numbers for orthogonal random fields and related asymptotic properties.
M\'oricz \cite{Moricz-78,Moricz-79, Moricz-80,Moricz-87, Moricz-89, Moricz-91}
established SLLN for quasi-orthogonal or quasi-stationary random fields.
M\'oricz, Stadtm\"uller and Thalmaier \cite{MST08} proved SLLN for blockwise
${\mathcal M}$-dependent random fields under moment conditions. Thanh \cite{thanh}
provided a necessary and sufficient condition for a general $d$-dimensional
arrays of random variables to satisfy a strong law of large numbers, which
can be applied to blockwise independent random fields.

For random fields with more information on their dependence structures such
as linear random fields, more results on their limiting behaviors have been known.
For example, Marinucci and Poghosyan \cite{MarinucciPoghosyan}, and Paulauskas
\cite{Paul} studied the asymptotics for linear random fields,
including law of large numbers, central limit theorems and invariance principles,
by applying the Beveridge-Nelson decomposition method. By applying ergodic
theory, Banys et al. \cite{BDP} studied strong law of large numbers of linear
random fields generated by ergodic or mixing random variables. Recently, Sang
and Xiao \cite{SangXiao} proved exact moderate and large deviation results for
linear random  fields with independent innovations and applied them to establish
laws of the iterated logarithm for linear random fields.

This paper is mainly motivated by our interest in asymptotic properties
of fractional random fields with long-range dependence and/or heavy-tailed
distributions. An important class of such random fields is formed by fractional
stable fields (cf. e.g., Samorodnitsky and Taqqu \cite{ST94}, Cohen and Istas
\cite{CI13}, Pipiras and Taqqu \cite{PT17}, Ayache, Roueff and Xiao \cite{ARX08},
Xiao \cite{Xiao08}).

We start with some notation and definitions. Let $\N_0=\N\cup \{0\}$ be the set
of non-negative integers. For any $\n \in \N_0^d$, we write it as $\n= (n_1, \ldots, n_d)$
or $\n = \langle n_i\rangle$. The $\ell^2$-norm and $\ell^\infty$-norm of $\n$ are
defined by
$$
\|\n\| = \big(n_1^2 + \cdots+ n_d^2\big)^{1/2}, \qquad
|\n| = \max\{n_1,n_2,\cdots, n_d\},$$
respectively.  For any constant $a > 0$, we denote
$a^{\n} = (a^{n_1}, \ldots, a^{n_d})$.

There is a natural partial order in $\N_0^d$: $\m \le \n$ if and only if
$m_i \le n_i$ for all $i = 1, \ldots, d$. For any $\m \le \n$, we denote $[\m, \n]
= \{ \k \in \N_0^d: \m \le \k \le \n\}$, which is called an interval
(or a rectangle) in $\N_0^d$.

Let $\{\xi(\n), \n \in \N_0^d\}$ be a real-valued random field indexed
by the lattice $\N_0^d$ and let $\{\Gamma_\n \}$ be a sequence of increasing
subsets of $ \N_0^d$. Denote the partial sum of $\{\xi(\n), \n \in \N_0^d\}$ over
$\Gamma_\n$ by
$$
S(\Gamma_\n) = \sum_{\k\in \Gamma_\n}\xi_{\k}.
$$
Let $\varphi: \N^d \to \R_+$ be a function such that $\varphi(\n) \to \infty$
as $|\n| \to \infty.$ 
We say that a random field $\{\xi(\n), \n \in \N_0^d\}$ satisfies the
strong law of large numbers with respect to $\{\Gamma_\n \}$ and $\varphi$
if
\begin{equation}\label{Def:SLLN}
\lim_{|\n| \to \infty} \frac{S(\Gamma_\n)}
{\varphi(\n)}  = 0\qquad \hbox{ a.s.}
\end{equation}
In the random field setting, the index sets $\Gamma_\n$ can have various 
configurations, ranging from spherical to rectangular, and to non-standard 
shapes, which arise naturally in many applied areas such as spatial statistics.
When $\{\Gamma_\n \}$ is clear from the context, we will simply refer to
(\ref{Def:SLLN}) as a $\varphi$-SLLN.

In this paper, we will consider spherical and rectangular sets  $\{\Gamma_\n \}$, 
see below. M\'oricz \cite{Moricz-78} considered SLLN over more types of increasing 
domains in $\N_0^d$. 

Two types of functions $\varphi$ are of particular interest in this paper:
\begin{itemize}
\item[(i)]\ $\varphi(\n) = f(\|\n\|)$, where $f:\R_+\to \R_+$ is a
non-decreasing function such that $f(x) \to \infty$ as $x \to \infty$. Such
functions $\varphi$ are useful in establishing SLLN for (approximately) isotropic
random fields, including  a class of stable random fields
with stationary increments in \cite{Xiao08}.

\item[(ii)]\ $\varphi(\n)=\varphi_{1}(n_1) \cdots \varphi_d(n_d)$,
where $\varphi_{1}, \ldots, \varphi_{d}$ are non-decreasing functions on $\R_+$
such that $\varphi_i(x) \uparrow \infty$ as $x \to \infty$. Functions of
this form arise naturally in studying SLLN for anisotropic random
fields with multiplicative kernels. Typical examples are linear or harmonizable
fractional stable sheets (\cite{ARX08,Xiao08}).
\end{itemize}



Throughout this paper, $p>0$ is a constant. We first consider SLLN for partial sums
over spherical domains $\Gamma_\n = Q_{|\n|}$,  where for any $r\geq 1$,
$$
Q_r=\{ \k=(k_1,\cdots, k_d)\in \N_0^d:\ \|\k\| \leq r\}.
$$
If we replace the $\ell^2$-norm by the  $\ell^\infty$-norm $|\cdot|$, then $Q_r$ is the cube $[0, r]^d$.

Let $Q_0=\emptyset$. We denote
\begin{equation}\label{Eq:Sum_s}
S(m;n)=\sum_{\k \in Q_{n+m}\setminus Q_{m}}\xi_{\k}.
\end{equation}

The following SLLN holds under both $\ell^2$ and $\ell^\infty$-norms. It can be applied to isotropic random fields.

\begin{theorem}\label{main-thm1}
Let $f:\R_+\to \R_+$ be a non-decreasing function such that $f(x) \to \infty$ as $x \to \infty$.
Assume that there are
constants $C_{2}\geq C_{1}>1$ such that  $C_{1}\le f (2 { x})/f ({ x}) \le C_{2}$ for all $ x \in \R_+$.
Let $a \geq 2$ be an integer that satisfies  $C_{1}^{\lfloor \log_2 a\rfloor}
>\max\{a, a 2^{p-1}\}$.
If
\begin{equation}\label{Eq:MomCon2}
\sum_{n \in \N_0}
\sup_{m \in \N_0}\E\bigg(\frac{|S(m;a^n)|^p}
{f(a^{n})^p}\bigg)<\infty,
\end{equation}
then
\begin{equation}\label{limit-1-d-slln}
\lim_{n\to\infty }\frac{S(0;n)}{f(n)}=0,\ \ \mathrm{a.s}.
\end{equation}
\end{theorem}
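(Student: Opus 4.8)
The plan is to follow the Levental–Chobanyan–Salehi strategy adapted to spherical shells indexed by the geometric sequence $a^n$. First I would reduce the almost-sure statement \eqref{limit-1-d-slln} to a statement along the subsequence $r = a^n$ together with control of the fluctuations of $S(0;r)/f(r)$ for $r$ between consecutive powers of $a$. The key quantity is the "maximal shell sum" $M_n = \max\{|S(0;r)| : a^{n-1} \le r \le a^n\}$, or more precisely a dyadic-type maximum of partial sums of the shells $Q_{a^k}\setminus Q_{a^{k-1}}$; by the maximal moment inequality proved earlier in the paper (the tool advertised in the abstract), the $p$-th moment of this maximum is bounded, up to a constant depending only on $p$, by a sum of the $p$-th moments of the shell sums $S(m;a^j)$, each divided by the appropriate $f(a^j)^p$. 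The role of the hypothesis $C_1^{\lfloor \log_2 a\rfloor} > \max\{a, a\,2^{p-1}\}$ is exactly to make this geometric-type sum converge: since $f(a^j)/f(a^{j'})$ grows at least like $C_1^{\lfloor \log_2 a\rfloor \, (j-j')}$ while the number of shells and the $2^{p-1}$ factor from the inequality for $p$-th moments grows only like $a^{\,\cdot}$ and $2^{p-1}$ per step, the ratio is summable.

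The main steps, in order, would be: (1) fix $n$ and write $S(0;r)$ for $a^{n-1}\le r\le a^n$ as a telescoping sum of shell increments plus a remainder shell; (2) apply the earlier maximal inequality to bound $\E\big(\max_{a^{n-1}\le r\le a^n} |S(0;r)|^p\big)$ by $C_p$ times a weighted sum $\sum_{j\le n} \sup_m \E|S(m;a^j)|^p \cdot (\text{combinatorial weight})$; (3) divide by $f(a^{n-1})^p$, use the two-sided doubling bound $C_1\le f(2x)/f(x)\le C_2$ to replace $f(a^{n-1})$ by $f(a^j)$ at the cost of a factor that decays geometrically in $n-j$, so that after summing over $n$ one obtains
\begin{equation*}
\sum_{n} \frac{1}{f(a^{n-1})^p}\,\E\Big(\max_{a^{n-1}\le r\le a^n}|S(0;r)|^p\Big)
\le C_p' \sum_{j} \sup_{m}\E\bigg(\frac{|S(m;a^j)|^p}{f(a^j)^p}\bigg) < \infty
\end{equation*}
by \eqref{Eq:MomCon2}; (4) conclude by Markov's inequality and Borel–Cantelli that $f(a^{n-1})^{-1}\max_{a^{n-1}\le r\le a^n}|S(0;r)| \to 0$ a.s.; (5) finally, for general $r$ pick $n$ with $a^{n-1}\le r \le a^n$ and note $|S(0;r)|/f(r) \le \big(\max_{a^{n-1}\le s\le a^n}|S(0;s)|\big)/f(a^{n-1})$, which tends to $0$.

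The step I expect to be the main obstacle is step (3): carefully tracking the interplay between the geometric growth of $f$ along powers of $a$ (governed by $C_1$ and $\lfloor\log_2 a\rfloor$), the number of lattice shells that accumulate (an $a$-type factor), and the $2^{p-1}$ loss incurred each time one passes from a sum to a maximum in $L^p$ for $p>1$ (or the subadditivity of $x\mapsto x^p$ for $p\le 1$). Getting the exponents to line up so that the double sum collapses to a single convergent sum is precisely where the arithmetic condition on $a$ is used, and it must be applied in both the $p\le 1$ and $p>1$ regimes; this is also the only place where the hypotheses are used in full strength, so it deserves the most care. The rest — telescoping, Borel–Cantelli, and the monotone interpolation between powers of $a$ — is routine and, importantly, works verbatim whether $\|\cdot\|$ denotes the $\ell^2$- or the $\ell^\infty$-norm, since $Q_r$ is monotone in $r$ in either case.
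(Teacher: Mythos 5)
Your overall strategy --- blocking at the geometric scales $a^n$, a maximal moment inequality whose weights are made summable by the hypothesis on $a$, Borel--Cantelli along the blocks, and monotone interpolation between consecutive powers of $a$ --- is the same Levental--Chobanyan--Salehi route the paper follows, and your step (5) is in fact more direct than the paper's ending, which instead shows $M(a^n;(a-1)a^n)/(f(a^{n+1})-f(a^n))\to 0$ and then appeals to Theorem 9.1 of Chobanyan, Levental and Mandrekar to interpolate. The genuine gap sits exactly at the step you flag as the main obstacle: the maximal moment inequality of your step (2) is not ``proved earlier in the paper'' --- the maximal inequalities of Section 3 concern rectangular sums and come \emph{after} this theorem --- so it has to be established here, and your sketch (telescoping plus a $2^{p-1}$ loss each time one passes to a maximum) does not by itself yield the required contraction; iterating the naive bound $\E M^p(0;a^n)\lesssim a\,2^{p-1}\sup_m\E M^p(m;a^{n-1})+\cdots$ is delicate precisely because the per-scale factor must be checked against $f(a^n)^p/f(a^{n-1})^p$, and for $0<p\le 1$ the constants do not line up the same way as for $p>1$.

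The paper's actual device, which your proposal is missing, is to run the recursion not on $\E M^p$ but on the deficiency $F_l(n)=\sup_k\E\big[(M^p(k;la^n)-|S(k;la^n)|^p)/f(la^n)^p\big]$. Starting from $M(k;la^n)\le\max\{M(k;(l-1)a^n),\,|S(k;(l-1)a^n)|+M(k+(l-1)a^n;a^n)\}$ and subtracting $|S(k;la^n)|^p$ \emph{before} taking expectations and the supremum over $k$, the leftover $|S|^p$ terms recombine so that one full step of the recursion costs the factor $(1+(a-1)2^{p-1})f(a^n)^p/f(a^{n+1})^p\le(1+(a-1)2^{p-1})/C_1^{p\lfloor\log_2 a\rfloor}<1$ --- this is exactly what the hypothesis $C_1^{\lfloor\log_2 a\rfloor}>\max\{a,a2^{p-1}\}$ buys, and it is why the recursion closes with a genuinely geometric weight. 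Note also that the recursion shifts the base point ($k\mapsto k+(l-1)a^n$), which is why the supremum over all $m$ in (\ref{Eq:MomCon2}) is indispensable and why the relevant object is $M(m;n)=\max_{k\le n}|S(m;k)|$ for arbitrary $m$, not only the maximum started at $0$. Until you supply this recursion (or an equivalent maximal inequality valid in both regimes of $p$), the argument is a plan rather than a proof.
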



The second theorem proves a SLLN for partial sums over rectangular domains $\Gamma_\n
= [{\bf 1},\, \n]$, that can be applied to anisotropic random fields.

For any $\m, \n \in \N_0^d$, denote by
\begin{equation}\label{Eq:Sum_rec}
S(\m;\, \n)=\displaystyle \sum_{k_1=m_1+1}^{m_1+n_1}
\cdots \sum_{k_d=m_d+1}^{m_d+n_d}
\xi(k_1, \cdots, k_d),
\end{equation}
the partial sum of the random variables $\xi(\k)$ over the interval
$(\m, \m+\n]$.

\begin{theorem}\label{main-thm2}
Suppose that, for $s=1,\cdots, d$,
$\varphi_s:\rr_+\to \rr_+$ is an increasing function that satisfy
$C_{3}<\varphi_s(2x)/\varphi_s(x)<C_{4}$ for all $x\in \R_+$
for some constants $C_{4}\geq C_{3}>1$.  Assume  $a\geq 2$
is an integer that satisfies   $C_{3}^{\lfloor \log_2 a\rfloor}
>\max\{a, a2^{p-1}\}$. If
\begin{equation}\label{Eq:MomCon1}
\sum_{\n \in \N_0^d}
\sup_{\m \in \N_0^d}\E\bigg(\frac{|S(\m; a^{n_1},
 \cdots, a^{n_d})|^p}{\varphi_1(a^{n_1})^p\cdots \varphi_d(a^{n_d})^p}\bigg)<\infty,
\end{equation}
then $\{\xi(\n), \n \in \N_0^d\}$ satisfies  the SLLN (\ref{Def:SLLN}) with $\Gamma_{\n} = [{\bf 1},\, \n]$ and
$\varphi(\n) = \varphi_1(n_1)\cdots \varphi_d(n_d)$.
\end{theorem}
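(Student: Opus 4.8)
\emph{Strategy and reduction to a dyadic net.}
The proof follows the same pattern as that of Theorem~\ref{main-thm1}, with the rectangular (product) structure in place of the spherical one: a Borel--Cantelli argument along the geometric sublattice generated by $a$, the passage from single partial sums to maxima over boxes being effected by the maximal moment inequality. For $\mathbf{N}\in\N_0^d$ put $B_{\mathbf{N}}=\{\n:a^{N_s}\le n_s<a^{N_s+1},\ 1\le s\le d\}$. These boxes partition the part of $\N_0^d$ with all coordinates $\ge 1$, and $|\n|\to\infty$ forces $|\mathbf{N}|\to\infty$. Since each $\varphi_s$ is increasing, $\varphi(\n)\ge\prod_{s=1}^d\varphi_s(a^{N_s})$ for $\n\in B_{\mathbf{N}}$, so it is enough to show that for every fixed $\ep>0$,
\[
\sum_{\mathbf{N}\in\N_0^d}\P\Bigl(\max_{\n\in B_{\mathbf{N}}}|S(\0;\n)|>\ep\prod_{s=1}^d\varphi_s(a^{N_s})\Bigr)<\infty;
\]
indeed, Borel--Cantelli then gives $\limsup_{|\n|\to\infty}|S(\0;\n)|/\varphi(\n)\le\ep$ a.s., and letting $\ep\downarrow 0$ along a sequence yields \eqref{Def:SLLN}. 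Writing $\mathbf{N}+\mathbf{1}=(N_1+1,\dots,N_d+1)$ and using $B_{\mathbf{N}}\subseteq[\mathbf{1},a^{\mathbf{N}+\mathbf{1}}]$, Markov's inequality bounds the $\mathbf{N}$-th term by $\ep^{-p}\bigl(\prod_s\varphi_s(a^{N_s})\bigr)^{-p}\,\E\max_{\mathbf{1}\le\m\le a^{\mathbf{N}+\mathbf{1}}}|S(\0;\m)|^p$, so everything reduces to a good estimate of this moment.

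\emph{The maximal moment inequality.}
This is where the main technical tool enters. Expanding each coordinate $n_s$ in base $a$ partitions $[\mathbf{1},\n]$ into at most $\prod_s(N_s+1)$ pairwise disjoint sub-rectangles, each of the form $(\m,\m+a^{\mathbf{j}}]$ with every $m_s$ a multiple of $a^{j_s}$; hence $S(\0;\n)$ is a sum of block sums $S(\m;a^{\mathbf{j}})$, at most $(a-1)^d$ of them carrying each scale $\mathbf{j}\le\mathbf{N}$. Applying the one-parameter maximal moment inequality in the style of \cite{chobanyan-l-s} successively in the $d$ coordinates (peeling off the top base-$a$ digit at each step) one obtains a bound of the form
\[
\E\max_{\mathbf{1}\le\m\le a^{\mathbf{N}+\mathbf{1}}}|S(\0;\m)|^p
\le C_{d,p}\sum_{\mathbf{j}\le\mathbf{N}+\mathbf{1}}\ \prod_{s=1}^d\bigl(a\,2^{(p-1)^+}\bigr)^{N_s-j_s}\ \sup_{\m\in\N_0^d}\E|S(\m;a^{\mathbf{j}})|^p,
\]
where the factor $a$ counts the aligned blocks peeled at each scale and $2^{(p-1)^+}$ comes from $|x+y|^p\le 2^{(p-1)^+}(|x|^p+|y|^p)$; the cases $p\ge1$ (where one also uses Jensen/power-mean on the outer sum over scales) and $0<p<1$ (where $|x+y|^p\le|x|^p+|y|^p$ is used throughout) are treated in the same pattern.

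\emph{Summing the series.}
Insert the last estimate into the first display and interchange the order of summation, putting $\mathbf{j}$ outside and the box index $\mathbf{N}$ (with $N_s\ge j_s-1$ for all $s$) inside. Since $a\ge 2^{\lfloor\log_2 a\rfloor}$ and $\varphi_s(2x)/\varphi_s(x)>C_3$, one has $\varphi_s(a^{N_s})/\varphi_s(a^{j_s})\ge C_3^{(N_s-j_s)\lfloor\log_2 a\rfloor}$; hence each inner $\mathbf{N}$-sum factors over coordinates into geometric series of ratio
\[
\frac{a\,2^{(p-1)^+}}{C_3^{\lfloor\log_2 a\rfloor}}=\frac{\max\{a,a2^{p-1}\}}{C_3^{\lfloor\log_2 a\rfloor}}<1,
\]
which is precisely the hypothesis on $a$. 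These series converge, so what remains is a constant multiple of $\sum_{\mathbf{n}}\sup_{\m}\E\bigl(|S(\m;a^{n_1},\dots,a^{n_d})|^p/(\varphi_1(a^{n_1})^p\cdots\varphi_d(a^{n_d})^p)\bigr)$, finite by \eqref{Eq:MomCon1}. This establishes the first display, and hence the theorem.

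\emph{Main obstacle.}
The crux is the maximal moment inequality of the second paragraph: one must iterate the Levental--Chobanyan--Salehi one-parameter estimate across the $d$ coordinates without accumulating spurious logarithmic or polynomial-in-$\mathbf{N}$ factors --- any such factor would destroy the comparison with \eqref{Eq:MomCon1} --- and keep the per-scale constant down to $\max\{a,a2^{p-1}\}$ so that the ratios above stay below $1$ under exactly the stated constraint on $a$. Once this inequality is in place, the handling of the boundary scales $j_s=N_s+1$ and the separate bookkeeping for $p\ge1$ versus $0<p<1$ are routine.
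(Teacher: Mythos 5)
Your proposal is correct in substance, and its technical heart coincides with the paper's: the maximal moment inequality you describe in your second paragraph --- peeling off the top base-$a$ digit coordinate by coordinate, with per-scale cost $a\,2^{(p-1)^+}$ and no polynomial-in-$\mathbf{N}$ losses --- is exactly what the paper establishes via the hierarchy of $s$-dimensional maximal sums $M_s(\m;\n)$ and the recursions for $F_{s,s-1}(\n)$ in its two lemmas (one for $0<p\le 1$, one for $p>1$); the only cosmetic difference is that you state the inequality unweighted and reinstate the $\varphi_s$-weights when summing, whereas the paper carries the weights through the recursion so that the geometric ratio $\max\{a,a2^{p-1}\}/C_3^{\lfloor\log_2 a\rfloor}<1$ appears inside each recursion step. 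Where you genuinely diverge is the endgame: the paper deduces from the summability of $\sup_{\m}\E\big(M_s^p(\m;a^{\n})/\varphi(a^{\n})^p\big)$ only the almost sure convergence of the normalized block maxima, and then interpolates to all $\n$ through a random-field Toeplitz lemma (its Lemma 3.1), constructing weights $w(\n;\k)$ and terms $s(\k)$ from block maxima at all scales $\k\le\n$; you instead apply Markov plus Borel--Cantelli directly to $\max_{\n\in B_{\mathbf{N}}}|S(\0;\n)|\le M_d(\0;a^{\mathbf{N}+\mathbf{1}})$ and use the doubling upper bound $C_4$ to compare $\varphi(a^{\mathbf{N}+\mathbf{1}})$ with $\varphi(\n)$. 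Your route is shorter and avoids the Toeplitz machinery entirely, at no loss of generality here; the Toeplitz formulation is somewhat more flexible when one wants refinements beyond convergence to zero. One small wrinkle to fix: since the normalization is $\varphi_s(a^{N_s})^p$, the geometric ratio you actually obtain is $a2^{(p-1)^+}/C_3^{p\lfloor\log_2 a\rfloor}$, not $a2^{(p-1)^+}/C_3^{\lfloor\log_2 a\rfloor}$; for $p\ge 1$ this is harmless, while for $0<p<1$ the stated hypothesis on $a$ does not immediately force it below $1$ --- but the paper's own Lemma for $0<p\le 1$ uses the identical bound $a/C^{p\lfloor\log_2 a\rfloor}<1$, so this is an ambiguity you share with the source rather than a gap you introduced.
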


The following direct consequence of Theorem \ref{main-thm2} is convenient to use.
\begin{corollary} \label{Co:2}
Let  the functions $\varphi_s$ ($s=1,\cdots, d$)  and the constant $a \ge 2$ be as in Theorem
\ref{main-thm2}. If there is  a  function $g$ such that
$$
\E \big(|S(\m;\, \n)|^p\big) \leq g(\n), \qquad \hbox{ for all }\ \m, \n \in \N_0^d
$$
and
$$
\sum_{\n \in \N_0^d} \frac{g(a^{n_1},\, \cdots, a^{n_d})}{\varphi_{1}(a^{n_1})^p
\cdots \varphi_{d}(a^{n_d})^p}<\infty,
$$
then  $\{\xi(\n), \, \n \in \N_0^d\}$ satisfies the SLLN as in Theorem
\ref{main-thm2}.
\end{corollary}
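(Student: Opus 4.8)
The plan is to obtain the corollary as an immediate reduction to Theorem~\ref{main-thm2}: I would simply verify that the moment condition (\ref{Eq:MomCon1}) follows from the two displayed hypotheses. The key observation is that the bound $\E\big(|S(\m;\,\n)|^p\big) \le g(\n)$ is \emph{uniform} in $\m$, since its right-hand side does not depend on $\m$. Because the denominator $\varphi_1(a^{n_1})^p\cdots\varphi_d(a^{n_d})^p$ in (\ref{Eq:MomCon1}) is deterministic, for every $\n = (n_1,\dots,n_d)\in\N_0^d$ one has
\[
\sup_{\m \in \N_0^d}\E\bigg(\frac{|S(\m;\, a^{n_1},\cdots,a^{n_d})|^p}{\varphi_1(a^{n_1})^p\cdots\varphi_d(a^{n_d})^p}\bigg)
= \frac{1}{\varphi_1(a^{n_1})^p\cdots\varphi_d(a^{n_d})^p}\,\sup_{\m \in \N_0^d}\E\big(|S(\m;\, a^{n_1},\cdots,a^{n_d})|^p\big)
\le \frac{g(a^{n_1},\cdots,a^{n_d})}{\varphi_1(a^{n_1})^p\cdots\varphi_d(a^{n_d})^p}.
\]

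Next I would sum this inequality over $\n \in \N_0^d$. The resulting upper bound is precisely the series
\[
\sum_{\n \in \N_0^d} \frac{g(a^{n_1},\cdots,a^{n_d})}{\varphi_1(a^{n_1})^p\cdots\varphi_d(a^{n_d})^p},
\]
which is assumed finite. Hence the series in (\ref{Eq:MomCon1}) converges. Since the functions $\varphi_s$ and the integer $a$ are, by hypothesis, taken exactly as in Theorem~\ref{main-thm2}, all assumptions of that theorem are in force, and its conclusion — that $\{\xi(\n),\,\n\in\N_0^d\}$ satisfies the SLLN (\ref{Def:SLLN}) with $\Gamma_\n = [{\bf 1},\,\n]$ and $\varphi(\n) = \varphi_1(n_1)\cdots\varphi_d(n_d)$ — gives the corollary.

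There is essentially no obstacle here: the argument is a one-line majorization, and the only point that requires (minimal) attention is the legitimacy of moving the supremum over $\m$ past the bound, which is valid precisely because $g$ is independent of $\m$ and the normalizing factor is nonrandom. In particular, no additional probabilistic estimate or maximal inequality is needed beyond what Theorem~\ref{main-thm2} already provides; the corollary is merely a convenient packaging of that theorem under a uniform $p$-th moment bound on the partial sums.
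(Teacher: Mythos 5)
Your argument is correct and is exactly the intended one: the paper presents this corollary as a "direct consequence" of Theorem \ref{main-thm2} without further proof, and the reduction is precisely the majorization you describe (the bound $g(\n)$ is uniform in $\m$, so the supremum in (\ref{Eq:MomCon1}) is dominated by the summable series). Nothing further is needed.
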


The method for proving Theorem \ref{main-thm2} is different from that
based on the Rademacher-Menshov-type maximal moment inequalities
(e.g., \cite{K82,Klesov03,Moricz-78,
Moricz-79, Moricz-80,Moricz-87, Moricz-89, Moricz-91,MST08,thanh}).
We rely more on the approach of Chobanyan et al.
\cite{ chobanyan-l-s,LevHC} and Nane et al. \cite{nane-xiao-zeleke}. In
particular, we  extend their maximal moment
inequalities  for sequences of random variables
to the case of random fields. We should mention that it is more
convenient to verify condition (\ref{Eq:MomCon1}) when the random field
$\{\xi(\n), \n \in \N_0^d\}$ has certain kind of stationarity (see Section
\ref{Sec:Appl}).


The rest of this paper is organized as follows. In Section \ref{sec:main1} and \ref{Sec:Main2},
we prove Theorems \ref{main-thm1} and \ref{main-thm2}, respectively.
Some applications of these theorems are given in
Section \ref{Sec:Appl}, where various random fields including fractional stable random fields,
orthogonal,  and quasi-stationary random fields are considered.

\section{\large Spherical sums: back to the one-dimensional case}
\label{sec:main1}

In this section we prove Theorem \ref{main-thm1}.
\begin{proof}[Proof of Theorem \ref{main-thm1}]
Recall $S(m;n)$ in (\ref{Eq:Sum_s}), and let
$ M(m;n)= \max_{k \le n} |S(m; k)|.
$
We divide the proof into two steps.

\noindent {\bf Step 1:} Our first task is to establish  a recursion for $ M(m;n)$.
We first consider the case $p>1$.
For any $k\in \N_0, n\in\N_0$, notice that
\begin{equation*}\label{Eq:Mkp}
M(k; a^{n+1})\leq \max\Big\{M(k; (a-1)a^{n}),\, |S(k;(a-1)a^n)|+M(k+(a-1)a^n; a^{n})\Big\}
\end{equation*}
 and in general for $l=2,\cdots, a$,
 \begin{equation*}\label{Eq:Mlp2}
M(k; la^{n})\leq \max\Big\{M(k; (l-1)a^{n}), \,|S(k;(l-1)a^n)|+M(k+(l-1)a^n; a^{n})\Big\}.
\end{equation*}
By using the elementary inequality $|x+y|^p\leq 2^{p-1}(|x|^p+|y|^p)$  we get
\begin{equation}
\begin{split}
&M^p(k; la^{n})\\
&\leq \max\Big\{M^p(k; (l-1)a^{n}), \, 2^{p-1}(|S(k;(l-1)a^n)|^p
+M^p(k+(l-1)a^n; a^{n}))\Big\}  \\
&\leq  (2^{p-1}-1)|S(k;(l-1)a^n)|^p+M^p(k; (l-1)a^{n})+2^{p-1}M^p(k+(l-1)a^n; a^{n}).
\label{max-1}
\end{split}
\end{equation}
Eq. \eqref{max-1} can be written as
\begin{equation} \label{Mlp-recursion}
\begin{split}
&M^p(k; la^{n})-|S(k;la^{n})|^p \\
&\leq   M^p(k; (l-1)a^{n})-|S(k,(l-1)a^n)|^p \\
&\quad +2^{p-1}\Big(M^p(k+(l-1)a^n; a^{n})-|S(k+(l-1)a^n;a^n)|^p\Big) \\
&\quad  -|S(k;la^{n})|^p + 2^{p-1}|S(k;(l-1)a^n)|^p+2^{p-1}|S(k+(l-1)a^n;a^n)|^p.
 \end{split}
 \end{equation}
For $l=1,\cdots,a$, define
\begin{equation}\label{Eq:Fn}
F_l(n)=\sup_{k\in \N_0}\E\bigg(\frac{M^p(k; la^{n})-|S(k;la^n)|^p}{f(la^{n})^p}\bigg) \ \
\hbox{ and }\  \ F (n+1) := F_a(n).
\end{equation}
Here 
\begin{equation}\label{Eq:Fn}
F(0)=\sup_{k\in \N_0}\E\bigg(\frac{M^p(k; 1)-|S(k;1)|^p}{f(1)^p}\bigg)
\end{equation}
By  the definition of $M(k; 1)$, $M(k; 1)=|S(k;1)|$ in  both $\ell^2$-norm and   $\ell^\infty$-norm. Hence $F(0)=0$.

Dividing both sides of \eqref{Mlp-recursion} by $f(la^{n})^p$, taking expectations, and
then the supremum over all the $k$'s, we get
\begin{eqnarray}\label{Eq:2.6}
F_l(n)\leq \frac{f((l-1)a^n)^p}{f(la^n)^p}F_{l-1}(n)+2^{p-1}\frac{f(a^n)^p}{f(la^n)^p}F_1(n)+G_{l}(n),
\end{eqnarray}
where  $G_{l}(n)$ is
\[
\begin{split}
G_l(n)&=\sup_{k\in \N_0}\Bigg\{\frac{2^{p-1}f(a^{n})^p}{f(la^{n})^p}
\E\bigg(\bigg|\frac{S(k+(l-1)a^n,a^n)}{f(a^{n})}\bigg|^p\bigg)\\
& \qquad \qquad \quad +
\frac{2^{p-1}f((l-1)a^{n})^p}{f(la^{n})^p}\E\bigg(\bigg|\frac{S(k,(l-1)a^n)}
{f((l-1)a^{n})}\bigg|^p\bigg)
- \E\bigg(\bigg|\frac{S(k;la^{n})}{f(la^{n})}\bigg|^p\bigg)\Bigg\}.
\end{split}
\]

The terms in $G_l(n)$ contribute nicely in the recursion for $F(n)$ because of the following observation.
By the assumption, we have
$$
\frac{f(a^{n})^p}{f(a^{n+1})^p}(1+(a-1)2^{p-1})\leq\frac{(1+(a-1)2^{p-1})}
{C_1^{p\lfloor \log_2a\rfloor}} := c<1.
$$
Let $D_{a,p}=2^{p-1}\big[(a-1)^{p-1}+(a-2)^{p-1}+\cdots+1+(a-1)\big]$. By
iterating (\ref{Eq:2.6}) as in the proof of Theorem 2.1 in
\cite{nane-xiao-zeleke} we obtain the following recursion:
\begin{equation}\label{the-recursion}
F_a(n)=F(n+1)\leq c F(n)+D_{a,p}\,\sup_{k\in \N_0}
\E\Big|\frac{S(k;a^n)}{f(a^n)}\Big|^p.
\end{equation}

\noindent {\bf Step 2:} We use the recursion obtained in Step 1 to finish the proof.
By summing up \eqref{the-recursion} from $n=0$ to $\infty$ and
using (\ref{Eq:MomCon2}) we get
\begin{equation*}\label{good-upper-bound}
\sum_{n=1}^\infty \sup_{k\in \N_0}\E\bigg(\frac{M^p(k; a^{n})-
|S(k;a^n)|^p}{f(a^{n})^p}\bigg)
\leq \frac{D_{a,p}}{1-c}\sum_{n=0}^\infty \sup_{k\in \N_0}
\E\Big|\frac{S(k;a^n)}{f(a^n)}\Big|^p
<\infty.
\end{equation*}
Consequently, 
for $l=0,1, \cdots, a-1$, we have almost surely
\begin{equation} \label{Eq:Mp}
\frac{M^p(la^n; a^{n})-|S(la^n;a^n)|^p}{f(a^{n})^p} \to 0,
\ \ \  \mathrm{as} \ n\to\infty.
\end{equation}
Note that \eqref{Eq:MomCon2}  implies that
\begin{equation*}
\frac{|S(la^n;a^n)|^p}{f(a^{n})^p}\to 0, \ \ \mathrm{ as } \  \ \
n\to\infty.
\end{equation*}
It follows from this and (\ref{Eq:Mp}) that almost surely
\begin{equation}\label{limit-l-zero}
\frac{M^p(la^n, a^{n})}{f(a^{n})^p}\to 0, \ \mathrm{as} \ n\to\infty.
\end{equation}
Since
\begin{equation}\label{phi-2}
\inf _n\frac{f(a^{n+1})}{f(a^{n})}\ge C_1^{\lfloor \log_2 a\rfloor} >1,
\end{equation}
we have
\begin{equation}\label{limit-zero}
\begin{split}
\frac{M(a^n; (a-1)a^{n})}{f(a^{n})}&=\frac{f(a^{n+1})-f(a^{n})}
{f(a^{n})}\frac{M(a^n; (a-1)a^{n})}{f(a^{n+1})-f(a^{n})}\\
&\geq
(C_1^{\lfloor \log_2 a\rfloor}-1)\frac{M(a^n; (a-1)a^{n})}{f(a^{n+1})-f(a^{n})}.
\end{split}
\end{equation}
Notice that
\[
\begin{split}
M(a^n; (a-1)a^n)\leq& M(a^n; a^n)+M(2a^n; a^n)+\cdots +M((a-1)a^n; a^n),
\end{split}
\]
we derive from \eqref{limit-l-zero} and \eqref{limit-zero} that almost surely
\begin{equation}\label{Eq:CLM}
\frac{M(a^n; (a-1)a^{n})}{f(a^{n+1})-f(a^{n})}\to 0, \ \mathrm{as} \ n\to\infty.
\end{equation}

Now  by the assumption on $f$,
\begin{equation*}\label{phi-3}
\frac{f(a^{n+1})}{f(a^{n})}\leq C_2^{\lfloor \log_2 a\rfloor}
 \end{equation*}
and by using Theorem 9.1 in Chobanyan, Levental and Mandrekar \cite{clm},
we see that (\ref{Eq:CLM}) implies
\begin{equation*}
\lim_{n \to \infty} \frac{S(0; n)} {f(n)} = 0\qquad \hbox{a.s.}
\end{equation*}
This finishes the proof of Theorem \ref{main-thm1} in the case $p>1$. The case $0<p\leq 1$
follows similarly and we omit the details.
\end{proof}

\section{Rectangular sums: the chaining method}
\label{Sec:Main2}


In this section we prove Theorem \ref{main-thm2}. We first state
a random field version of the  Toeplitz Lemma from M\'oricz
\cite{Moricz-78}. It will play a crucial role in the proof of Theorem
\ref{main-thm2}.
\begin{lemma}\label{Toeplitz-lemma}
Let $\{w(\m;\k), \m, \k \in \N_0^d\}$ be a set of non-negative numbers
with the following two properties: There is a finite constant $C$ such that
\begin{equation}\label{upper-bound-toeplitz}
\sup_{\m\in \N_0^d} \, \sum_{ \k \in \N_0^d}w(\m;\k)\leq C
\end{equation}
 and
\begin{equation}\label{limit-toeplitz}
\lim_{|\m|\to\infty}w(\m;\k)=0
\end{equation}
for all $\k\in \N_0^d$. If $\{s(\k):\  \k\in \N_0^d\}$ is a sequence of real numbers such that
\begin{equation}\label{term-to-zero-toeplitz}
s(\k)\to 0\ \ \mathrm{ as }\ \ |\k|\to \infty,
\end{equation}
then
\begin{equation*}
t(\m)=\sum_{\k\in \N_0^d} w(\m;\k)s(\k)\to 0\ \ \mathrm{ as }\ \  |\m|\to\infty.
\end{equation*}
\end{lemma}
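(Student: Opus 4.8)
The plan is to transcribe the classical one-parameter Toeplitz lemma to the lattice $\N_0^d$; the multidimensional index set enters only through the observation that an $\ell^\infty$-ball in $\N_0^d$ is a finite set. First I would record that hypothesis \eqref{term-to-zero-toeplitz} forces $\{s(\k)\}$ to be bounded: the set $\{\k \in \N_0^d : |s(\k)| \ge 1\}$ is bounded in the $\ell^\infty$-norm, hence finite, so $M := \sup_{\k \in \N_0^d} |s(\k)| < \infty$. In particular, for every $\m$ we have $\sum_{\k} w(\m;\k)\,|s(\k)| \le M \sum_{\k} w(\m;\k) \le MC$ by \eqref{upper-bound-toeplitz}, so the series defining $t(\m)$ converges absolutely and $t(\m)$ is well defined.

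Then, given $\varepsilon > 0$, I would use \eqref{term-to-zero-toeplitz} to pick an integer $N$ with $|s(\k)| < \varepsilon$ whenever $|\k| > N$, and set $K = \{\k \in \N_0^d : |\k| \le N\}$, a finite set. Splitting the defining sum according to whether $\k \in K$ gives
\[
|t(\m)| \le \sum_{\k \in K} w(\m;\k)\,|s(\k)| + \sum_{\k \notin K} w(\m;\k)\,|s(\k)|.
\]
The tail term is at most $\varepsilon \sum_{\k \notin K} w(\m;\k) \le \varepsilon \sup_{\m \in \N_0^d} \sum_{\k \in \N_0^d} w(\m;\k) \le C\varepsilon$ by \eqref{upper-bound-toeplitz}. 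For the head term, since $K$ is finite and, by \eqref{limit-toeplitz}, $w(\m;\k) \to 0$ as $|\m| \to \infty$ for each of the finitely many $\k \in K$, there is an integer $L$ such that $|\m| > L$ forces $w(\m;\k) < \varepsilon/(1 + M\,\#K)$ for every $\k \in K$ at once, where $\#K$ denotes the number of elements of $K$; the head term is then bounded by $\#K \cdot M \cdot \varepsilon/(1 + M\,\#K) < \varepsilon$.

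Combining the two estimates gives $|t(\m)| \le (1 + C)\varepsilon$ for all $\m$ with $|\m| > L$, and since $\varepsilon > 0$ is arbitrary this yields $t(\m) \to 0$ as $|\m| \to \infty$. I do not anticipate any genuine obstacle: the argument is a direct adaptation of the one-parameter Toeplitz lemma, and the only point that merits a word of care is the meaning of the limit symbols. Because $|\cdot|$ denotes the $\ell^\infty$-norm, ``$|\m| \to \infty$'' is to be read as: for every $R$, all but finitely many multi-indices $\m$ satisfy $|\m| > R$; with this reading the threshold $L$ is legitimate, being the maximum of the finitely many thresholds arising from the elements of $K$.
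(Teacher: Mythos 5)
Your proof is correct. The paper states Lemma \ref{Toeplitz-lemma} without proof, citing M\'oricz \cite{Moricz-78}, and your argument --- bounding $s$ via the finiteness of $\ell^\infty$-balls in $\N_0^d$, then splitting the sum into a finite ``head'' controlled by \eqref{limit-toeplitz} and a ``tail'' controlled by \eqref{upper-bound-toeplitz} and \eqref{term-to-zero-toeplitz} --- is exactly the standard Toeplitz argument that the citation is invoking, correctly adapted to the multi-index setting.
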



For $s=1,2,\cdots, d$, define
\begin{equation} \label{Eq:Ms}
M_{s}(\m;\, \n) = \max_{1\leq k_1 \leq n_1, \cdots, 1\leq k_s\leq n_s}
|S(m_1,\cdots, m_d;\, k_1,\cdots, k_s, n_{s+1}, \cdots, n_d)|,
\end{equation}
and
\begin{equation} \label{s-s-1-maximum}
\begin{split}
F_{s,s-1}(\n)
&=\sup_{\m\in \N_0^d}\E\bigg(\frac{M_{s}^p(\m;a^{\n})-M^p_{s-1}(\m;a^{\n})}{\varphi_1(a^{n_1})^p
\, \cdots \varphi_d(a^{n_d})^p}\bigg).
\end{split}
\end{equation}
We call $M_{s}(\m;\, \n)$ the $s$-dimensional maximal sum of size $\n$, and $|S(\m;\n)|$ the $0$-dimensional maximal sum.
We will prove several maximal moment inequalities. The first is for the case of $0<p\leq 1$.


\begin{lemma}
Let $0 < p \leq1$ and let  $\varphi_s:\rr_+\to \rr_+$  ($s=1, \cdots, d$) be the functions  as in
Theorem \ref{main-thm2}.  There is  a constant $\kappa_1  \in (0,1)$ such that for $s=1,\,
\cdots,d$,
\begin{equation}  \label{p-less-than-one}
\begin{split}
\sum_{\n \in \N^d}  F_{s, s-1}(\n)
\leq  \frac{1}{1- \kappa_1}\sum_{\n \in \N_0}  \sup_{\m \in
\N_0^d}\E\bigg(\frac{M^p_{s-1}(\m;\, a^{\n})}{\varphi_1(a^{n_1})^p \cdots
\varphi_d(a^{n_d})^p}\bigg).
\end{split}
\end{equation}
\end{lemma}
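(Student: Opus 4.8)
The plan is to fix $s \in \{1, \dots, d\}$ and derive a recursion for $F_{s,s-1}(\n)$ in the single variable $n_s$, holding the other coordinates fixed, by decomposing the $s$-dimensional maximal sum $M_s(\m; a^{\n})$ along the $s$-th axis in blocks of size $a^{n_s}$. Concretely, writing $N = a^{n_s}$ and $\n' = (n_1, \dots, n_{s-1}, n_s+1, n_{s+1}, \dots, n_d)$, I would first observe that $M_s(\m; a^{\n'})$ built over the range $k_s \le a^{n_s+1}$ can be bounded, just as in the one-dimensional argument in Section~\ref{sec:main1}, via the elementary subadditivity $M_s(\cdot; la^{n_s}) \le \max\{M_s(\cdot; (l-1)a^{n_s}),\, |S(\cdot; (l-1)a^{n_s})| + M_s(\cdot; a^{n_s})\}$ in the $s$-th slot, where the last sum is taken over a shifted block. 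Since $0 < p \le 1$, the key simplification is that $|x+y|^p \le |x|^p + |y|^p$, so no factor $2^{p-1} > 1$ appears; this is exactly why the constant $\kappa_1 \in (0,1)$ can be extracted cleanly.

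The next step is to subtract $M_{s-1}^p$ from both sides (this is the quantity controlled by $F_{s,s-1}$): crucially, $M_{s-1}(\m; a^{\n})$ does not depend on the $s$-th coordinate of the maximization range, only on $n_s$ through $\varphi_s(a^{n_s})$, so the telescoping in $k_s$ interacts correctly with the subtraction. After dividing by $\varphi_1(a^{n_1})^p \cdots \varphi_d(a^{n_d})^p$, taking expectations and then $\sup_{\m}$, and using the growth hypothesis $C_3 < \varphi_s(2x)/\varphi_s(x) < C_4$ together with the choice of $a$ satisfying $C_3^{\lfloor \log_2 a\rfloor} > \max\{a, a2^{p-1}\}$ (here for $p \le 1$ one just needs $C_3^{\lfloor \log_2 a\rfloor} > a$), I expect the ratios $\varphi_s((l-1)a^{n_s})^p / \varphi_s(la^{n_s})^p$ and $\varphi_s(a^{n_s})^p/\varphi_s(la^{n_s})^p$ to be summable in $l$ with a total contraction factor $\kappa_1 < 1$ multiplying $F_{s,s-1}(n_1, \dots, n_s, \dots, n_d)$ (at level $n_s$) by $F_{s,s-1}$ at level $n_s+1$, plus a remainder term that is a bounded multiple of $\sup_{\m} \E\big(M_{s-1}^p(\m; a^{\n}) / \prod \varphi_i(a^{n_i})^p\big)$. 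This mirrors the passage from (\ref{Eq:2.6}) to (\ref{the-recursion}), iterating over $l = 1, \dots, a$.

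Finally I would sum the recursion $F_{s,s-1}(\dots, n_s+1, \dots) \le \kappa_1 F_{s,s-1}(\dots, n_s, \dots) + (\text{const}) \sup_{\m}\E(M_{s-1}^p/\prod\varphi_i^p)$ over $n_s \in \N_0$, using $F_{s,s-1} = 0$ when $n_s = 0$ (since $M_s$ and $M_{s-1}$ coincide there, as the $s$-th maximization range is trivial), to get $\sum_{n_s} F_{s,s-1} \le \frac{1}{1-\kappa_1} \sum_{n_s} \sup_{\m}\E(M_{s-1}^p/\prod\varphi_i^p)$ with the other coordinates fixed; then summing over $(n_1, \dots, n_{s-1}, n_{s+1}, \dots, n_d) \in \N_0^{d-1}$ yields (\ref{p-less-than-one}). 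The main obstacle I anticipate is bookkeeping: making sure the shifts $\m \mapsto \m + (l-1)a^{n_s} e_s$ inside the blockwise decomposition are absorbed correctly by the supremum over $\m \in \N_0^d$ (so that the recursion closes with the same functional $F_{s,s-1}$ and the same $\sup_\m$), and checking that the subtracted term $M_{s-1}^p$ telescopes compatibly with the $k_s$-maximization — i.e., that $M_{s-1}(\m + (l-1)a^{n_s}e_s; a^{\n})$ and $M_{s-1}(\m; a^{\n})$ can both be dominated within the scheme. The inequality $p \le 1$ removes the only genuine analytic difficulty, so beyond this indexing care the argument should be a direct transcription of the one-parameter recursion.
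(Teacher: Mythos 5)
Your proposal follows essentially the same route as the paper: decompose $M_s$ blockwise along the $s$-th coordinate, use the subadditivity $|x+y|^p\le |x|^p+|y|^p$ for $p\le 1$, divide by the $\varphi$'s to extract a contraction factor $\kappa_1=a\,\varphi_s(a^{n_s})^p/\varphi_s(a^{n_s+1})^p<1$, and sum the resulting one-variable recursion in $n_s$ (with the boundary fact $F_{s,s-1}=0$ at $n_s=0$) before summing over the remaining coordinates. The only cosmetic discrepancy is your max-form decomposition with $|S(\cdot;(l-1)a^{n_s})|$, which for $s\ge 2$ should involve $M_{s-1}(\cdot;\ldots,(l-1)a^{n_s},\ldots)$ since the first $s-1$ coordinates are still being maximized; this is immaterial here because, as in the paper, for $p\le 1$ one simply uses the plain bound $M_s(\cdot;la^{n_s})\le M_s(\cdot;(l-1)a^{n_s})+M_s(\hbox{shifted};a^{n_s})$, which dominates both branches of the max.
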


\begin{proof}
We start by establishing a recursion for  $F_{s,s-1}(\n)$ in $\n$ for each $s=1, \cdots d$.
By  definition (\ref{Eq:Ms}) we see that for $l=2, \cdots, a$,
\begin{equation}\label{Eq:Mlp1}
\begin{split}
&M_{d}(\m;\, a^{n_1+1},\cdots, a^{n_{d-1}+1}, l a^{n_d})\\
&\leq   M_{d}(\m;\, a^{n_1+1}, \cdots, a^{n_{d-1}+1},(l-1)a^{n_d})\\
&\quad \quad  + M_d(m_1, \ldots,m_{d-1}, m_d+(l-1)a^{n_d};\, a^{n_1+1},
\cdots,a^{n_{d-1}+1}, a^{n_d}).
\end{split}
\end{equation}
It follows from \eqref{Eq:Mlp1} and the elementary inequality $|x+y|^p\leq |x|^p+|y|^p$  that
\[
\begin{split}
&M_{d}^p(\m;a^{n_1+1},a^{n_2+1}, \cdots ,la^{n_d}) \\
&\leq  M^p_{d}(\m;a^{n_1+1},a^{n_2+1}, \cdots, a^{n_{d-1}+1}, (l-1)a^{n_d})  \\
&+M^p_d(m_1,\cdots, m_{d-1},m_d+(l-1)a^{n_d};a^{n_1+1},a^{n_2+1},
\cdots, a^{n_{d-1}+1},a^{n_d}).
\end{split}
\]
By iterating the above inequality, we derive
\begin{equation}\label{Eq:Mpi}
\begin{split}
&M_{d}^p(\m;\, a^{n_1+1},a^{n_2+1}, \cdots, a^{n_d+1}) \\
&\leq  M^p_{d}(m_1,\,\cdots, m_d;a^{n_1+1},a^{n_2+1}, \cdots,
a^{n_{d-1}+1},a^{n_d})\\
&\ \ \ + M^p_d(m_1,\,\cdots,m_{d-1},
m_d+a^{n_d};a^{n_1+1},a^{n_2+1}, \cdots, a^{n_{d-1}+1}, a^{n_d})\\
&\ \ \ + M^p_d(m_1,\,\cdots,
m_{d-1},m_d+2a^{n_d};a^{n_1+1},a^{n_2+1}, \cdots, a^{n_{d-1}+1}, a^{n_d}) + \cdots \\
&\ \ \ + M^p_d(m_1, \,\cdots,m_{d-1},
m_d+(a-1)a^{n_d};a^{n_1+1},a^{n_2+1}, \cdots, a^{n_{d-1}+1}, a^{n_d}).\\
\end{split}
\end{equation}
By adding and subtracting terms in (\ref{Eq:Mpi}),  we get
\[
\begin{split}
&M_{d}^p(\m; \, a^{n_1+1}, \cdots, a^{n_d+1})
-M_{d-1}^p(\m; \, a^{n_1+1},\cdots, a^{n_d+1})\\
&\leq
-M_{d-1}^p(m_1,\cdots, m_d;\, a^{n_1+1},\cdots, a^{n_d+1})\\
&\quad + M^p_d(m_1,\cdots, m_d;\, a^{n_1+1}, \cdots,a^{n_{d-1}+1}, a^{n_d})\\
&\quad  - M_{d-1}^p(m_1,\cdots, m_d;a^{n_1+1},\cdots, a^{n_{d-1}+1},a^{n_d})\\
&\quad  + M_{d-1}^p(m_1,\cdots, m_d;a^{n_1+1},\cdots,a^{n_{d-1}+1}, a^{n_d})\\
&\quad  + M^p_d(m_1,\cdots,m_{d-1}, m_d+a^{n_d};\, a^{n_1+1}, \cdots, a^{n_{d-1}+1}, a^{n_d})\\
&\quad  - M^p_{d-1}(m_1,\cdots,m_{d-1}, m_d+a^{n_d};\, a^{n_1+1}, \cdots, a^{n_{d-1}+1}, a^{n_d})\\
&\quad  + M_{d-1}^p(m_1,\cdots, m_{d-1},m_d+a^{n_d};\, a^{n_1+1},\cdots, a^{n_{d-1}+1}, a^{n_d})\\
&\quad  + \cdots + \\
 &\quad  + M^p_d(m_1,\cdots,m_{d-1},
m_d+(a-1)a^{n_d};\, a^{n_1+1},\cdots, a^{n_{d-1}+1}, a^{n_d})\\
&\quad  - M_{d-1}^p(m_1,\cdots,m_{d-1}, m_d+(a-1)a^{n_d};\, a^{n_1+1},\cdots, a^{n_{d-1}+1}, a^{n_d})\\
&\quad  + M_{d-1}^p(m_1,\cdots,m_{d-1}, m_d+(a-1)a^{n_d};\, a^{n_1+1},\cdots, a^{n_{d-1}+1}, a^{n_d}).\\
\end{split}
\]
Dividing both sides by $\varphi_1(a^{n_1+1})^p \cdots \varphi_d(a^{n_d+1})^p$, taking
expectations and then the supremum over $\m \in \N_0$
we get,
\[
\begin{split}
F_{d, d-1}(n_1+1, \cdots, n_d+1)&\leq a \frac{\varphi_d(a^{n_d})^p}
{\varphi_d(a^{n_d+1})^p} F_{d, d-1}(n_1+1,\cdots , n_{d-1}+1,n_d) \\
&\ \ \ + H(n_{1}+1,\cdots , n_{d-1}+1, n_d),
\end{split}
\]
where
\[
\begin{split}
&H(n_{1}+1,\cdots, n_{d-1}+1, n_d)\\
&= \sup_{\m \in  \N_0^d}\bigg[ \frac{\varphi_d(a^{n_d})^p}{\varphi_d(a^{n_d+1})^p}
\sum_{l=0}^{a-1} \E\Big(\frac{M_{d-1}^p(m_1,\cdots,m_{d-1},
m_d+la^{n_d};a^{n_1+1},\cdots, a^{n_{d-1}+1}, a^{n_d})}{\varphi_1(a^{n_1+1})^p
\cdots \varphi_{d-1}(a^{n_{d-1}+1})^p \varphi_d(a^{n_d})^p}\Big)\\
& \qquad \qquad \quad - \frac{M_{d-1}^p(m_1,\cdots,
m_d;a^{n_1+1},\cdots a^{n_d+1})}{\varphi_1(a^{n_1+1})^p
 \cdots \varphi_d(a^{n_d+1})^p}\bigg]\\
&\leq   a \frac{\varphi_d(a^{n_d})^p}{\varphi_d(a^{n_d+1})^p}
\sup_{\m  \in \N_0^d}  \E\bigg(\frac{M_{d-1}^p(m_1,\cdots, m_d;a^{n_1+1},\cdots, a^{n_{d-1}+1}
a^{n_d})}{\varphi_1(a^{n_1+1})^p  \cdots \varphi_{d-1}(a^{n_{d-1}+1})^p
\varphi_d(a^{n_d})^p}\bigg).
\end{split}
\]
 The terms in $H(n_{1}+1,\cdots, n_{d-1}+1, n_d)$ help us get the recursion we need.
By the assumption on $\varphi_d$, we have
$$
a
\frac{\varphi_d(a^{n_d})^p}{\varphi_d(a^{n_d+1})^p} \leq
\frac{a}{C_{1,d}^{p\lfloor \log_2a\rfloor}}:=k_{d} <
1.$$
 Thus we get the recursion formula for $F_{d, d-1}$:
\[
\begin{split}
F_{d, d-1}(n_1+1, \cdots, n_d+1)&\leq k_d F_{d, d-1}(n_1+1,\cdots, n_{d-1} +1,n_d) \\
& \quad + \sup_{\m \in \N_0^d}\E\bigg(\frac{M_{d-1}^p(m_1,\cdots, m_d;\,a^{n_1+1},\cdots,
a^{n_d})}{\varphi_1(a^{n_1+1})^p   \cdots
\varphi_d(a^{n_d})^p}\bigg).
\end{split}
\]
Summing this recursion over $\n\in \N_0^d$, we get
\[
 \sum_{\n\in \N^d} F_{d, d-1}(\n)  \\
\leq  \frac{1}{1-k_d}\sum_{\n \in \N_0^d} \sup_{\m  \in
\N_0}\E\bigg(\frac{M^p_{d-1}(\m; \, a^{\n})}{\varphi_1(a^{n_1})^p \cdots
\varphi_d(a^{n_d})^p}\bigg).
\]
With essentially the same argument  as above we get for $s=1, \cdots , d-1$ that
\begin{equation}\label{p-less-than-one2}
\sum_{\n \in \N^d} F_{s, s-1}( \n)
\leq  \frac{1}{1-k_s}\sum_{\n \N_0^d}\sup_{ \m\in \N_0^d}
\E\bigg(\frac{M^p_{s-1}(\m;\, a^{\n})}{\varphi_1(a^{n_1})^p \cdots
\varphi_d(a^{n_d})^p}\bigg).
\end{equation}
Thus, we obtain (\ref{p-less-than-one})  with $\kappa_1 = \max_{1 \le s \le d} k_s$.
\end{proof}


The next lemma establishes  a maximal moment inequality for $p > 1$.

\begin{lemma}
Let $ p > 1$ and let   $\varphi_s:\rr_+\to \rr_+$  ($s=1,\cdots, d$) be
as in Theorem \ref{main-thm2}. There exist constants $\kappa_2  \in (0,1)$
and $D_{a,p} \in (0, \infty)$ such that for $s=1,\cdots,d$,
\begin{equation} \label{p-more-than-one}
\sum_{\n \in \N^d} F_{s, s-1}(\n)
\leq  \frac{D_{a,p}}{1- \kappa_2}\sum_{\n \in \N_0^d}
\sup_{\m \in \N_0^d} \E\bigg(\frac{M^p_{s-1}(\m;\, a^{\n})} {\varphi_1(a^{n_1})^p \cdots \varphi_d(a^{n_d})^p}\bigg).
\end{equation}
\end{lemma}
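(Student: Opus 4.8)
The plan is to mimic the proof of the $0<p\le 1$ lemma, replacing the subadditivity inequality $|x+y|^p\le |x|^p+|y|^p$ by the convexity estimate $|x+y|^p\le 2^{p-1}(|x|^p+|y|^p)$, exactly as was done in Step~1 of the proof of Theorem~\ref{main-thm1}. First I would fix $s\in\{1,\dots,d\}$ (it suffices to treat $s=d$, the general case being identical after relabelling the coordinates that are being maximized over) and, for $l=2,\dots,a$, start from the same splitting
\begin{equation*}
M_d(\m;\dots,la^{n_d})\le M_d(\m;\dots,(l-1)a^{n_d})+M_d(m_1,\dots,m_{d-1},m_d+(l-1)a^{n_d};\dots,a^{n_d}).
\end{equation*}
Raising to the $p$-th power with $|x+y|^p\le 2^{p-1}(|x|^p+|y|^p)$ and rearranging in the spirit of \eqref{Mlp-recursion}, I would bring out the crucial cancellation: write the inequality in terms of the differences $M_d^p-M_{d-1}^p$ (with the $0$-dimensional sum $|S|^p$ playing the role of $M_{d-1}^p$ when needed), so that on the right one sees $F_{d,d-1}$ at the previous level plus a remainder collecting the ``$2^{p-1}-1$'' type terms — these are controlled by $\sup_{\m}\E\big(M_{d-1}^p(\m;a^{\n})/\prod\varphi_i(a^{n_i})^p\big)$, up to a combinatorial constant. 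Iterating over $l=2,\dots,a$ produces a constant of the form $D_{a,p}=2^{p-1}\big[(a-1)^{p-1}+(a-2)^{p-1}+\cdots+1+(a-1)\big]$, the same one appearing before \eqref{the-recursion}.

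The second ingredient is the contraction factor. After dividing by $\varphi_1(a^{n_1+1})^p\cdots\varphi_d(a^{n_d+1})^p$, taking $\E$, and then $\sup_{\m}$, the leading coefficient in front of $F_{d,d-1}$ at the smaller argument is
$$\frac{\varphi_d(a^{n_d})^p}{\varphi_d(a^{n_d+1})^p}\big(1+(a-1)2^{p-1}\big)\le \frac{1+(a-1)2^{p-1}}{C_{3}^{p\lfloor\log_2 a\rfloor}}=:\kappa_2,$$
and the hypothesis $C_3^{\lfloor\log_2 a\rfloor}>\max\{a,a2^{p-1}\}\ge 1+(a-1)2^{p-1}$ (since $1+(a-1)2^{p-1}\le a2^{p-1}$ for $p\ge1$) gives $\kappa_2<1$; the upper bound $\varphi_d(2x)/\varphi_d(x)<C_4$ also yields $\varphi_d(a^{n_d})/\varphi_d(a^{n_d+1})<1$, which keeps all intermediate coefficients bounded. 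Thus I obtain a recursion
$$F_{d,d-1}(n_1+1,\dots,n_d+1)\le \kappa_2\, F_{d,d-1}(n_1+1,\dots,n_{d-1}+1,n_d)+D_{a,p}\sup_{\m}\E\Big(\frac{M_{d-1}^p(\m;a^{n_1+1},\dots,a^{n_{d-1}+1},a^{n_d})}{\varphi_1(a^{n_1+1})^p\cdots\varphi_d(a^{n_d})^p}\Big).$$
Summing this geometric-type recursion over $\n\in\N_0^d$ (the sum in the last variable telescopes through the factor $1/(1-\kappa_2)$, exactly as in the $p\le1$ lemma) gives \eqref{p-more-than-one} for $s=d$, and the same argument with the coordinate being incremented chosen among $1,\dots,s$ handles general $s$, after which one takes $\kappa_2=\max_{1\le s\le d}k_s$ over the analogous per-coordinate constants $k_s=\big(1+(a-1)2^{p-1}\big)/C_{3}^{p\lfloor\log_2 a\rfloor}$.

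The main obstacle, and the only place requiring real care, is the bookkeeping in the ``add and subtract'' step: when $p>1$ one must track the extra $2^{p-1}$ weights through all $a-1$ blocks and verify that every term not absorbed into $F_{d,d-1}$ at the previous level is dominated (after dividing by the product of $\varphi_i(a^{n_i+1})^p$) by a constant times $\sup_{\m}\E\big(M_{d-1}^p/\prod\varphi_i^p\big)$ evaluated at the correct arguments — in particular one uses $\varphi_i(a^{n_i})\le\varphi_i(a^{n_i+1})$ to replace shifted arguments by the ones appearing in the statement. This is precisely the random-field analogue of the iteration carried out in Theorem~2.1 of \cite{nane-xiao-zeleke} and in Step~1 above, so I would present it by deriving the one-block recursion carefully and then stating that iterating over $l=2,\dots,a$ yields the constant $D_{a,p}$, referring to \eqref{Eq:2.6}--\eqref{the-recursion} for the structurally identical computation rather than repeating it in full.
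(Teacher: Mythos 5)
Your outline lands on the paper's recursion and the correct constants, but the one inequality you actually display as your starting point is the wrong one for $p>1$, and the passage from it to the ``$2^{p-1}-1$'' terms does not go through. You begin from the purely additive splitting
\[
M_d(\m;\dots,la^{n_d})\le M_d(\m;\dots,(l-1)a^{n_d})+M_d(m_1,\dots,m_{d-1},m_d+(l-1)a^{n_d};\dots,a^{n_d}),
\]
which is the inequality \eqref{Eq:Mlp1} used in the $0<p\le1$ lemma. Applying $|x+y|^p\le 2^{p-1}(|x|^p+|y|^p)$ to this sum puts a factor $2^{p-1}$ on \emph{both} terms, so the coefficient of $M_d^p(\m;\dots,(l-1)a^{n_d})$ --- and hence of $F_{d,d-1}$ at the previous level --- is $2^{p-1}>1$, not $1$. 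Iterating over $l=2,\dots,a$ then accumulates a factor of order $2^{(p-1)(a-1)}$ in front of $F_{d,d-1}(n_1+1,\dots,n_{d-1}+1,n_d)$, and the resulting contraction requirement is strictly stronger than $C_3^{\lfloor\log_2 a\rfloor}>\max\{a,\,a2^{p-1}\}$; for instance with $a=16$, $p=2$ the additive route needs roughly $C_3^4>313$, while the hypothesis only guarantees $C_3^4>32$. So the recursion does not close under the stated assumptions.

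The missing ingredient is the max-based decomposition \eqref{Eq:Mlp3},
\[
M_d(\m;\dots,la^{n_d})\le\max\Big\{M_d(\m;\dots,(l-1)a^{n_d}),\ M_{d-1}(\m;\dots,(l-1)a^{n_d})+M_d(\text{shifted};\dots,a^{n_d})\Big\},
\]
used together with the pointwise bound $M_{d-1}\le M_d$: the convexity inequality is applied only inside the second branch of the max, and then $\max\{A^p,\,2^{p-1}(B^p+C^p)\}\le A^p+(2^{p-1}-1)B^p+2^{p-1}C^p$ (valid precisely because $B=M_{d-1}\le M_d=A$) keeps coefficient $1$ on $M_d^p$ at level $l-1$ while shunting the entire $2^{p-1}$ penalty onto the lower-dimensional maximum $M_{d-1}^p$, which is what ends up in the source term $\sup_{\m}\E\big(M_{d-1}^p/\prod_i\varphi_i(a^{n_i})^p\big)$. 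This is exactly the mechanism of \eqref{max-1} in the one-dimensional proof, with $|S|$ playing the role of $M_{d-1}$ there. Everything else in your plan --- the add-and-subtract bookkeeping, the contraction factor $\kappa_2=(1+(a-1)2^{p-1})/C_3^{p\lfloor\log_2 a\rfloor}$ with the verification $1+(a-1)2^{p-1}\le a2^{p-1}$, the constant $D_{a,p}$, the use of monotonicity of $\varphi_i$ to adjust arguments, and the summation over $\n$ --- matches the paper once this starting inequality is corrected.
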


\begin{proof}

We start with establishing a recursion for  $F_{s,s-1}(\n)$ in $\n$ for each $s=1, \cdots,  d$.
Observe that for $l=2,\cdots,a$,
\begin{equation}\label{Eq:Mlp3}
\begin{split}
&M_{d}(\m;\, a^{n_1+1},\cdots, a^{n_{d-1}+1}, l a^{n_d})\\
&\leq \max \bigg\{M_{d}(\m;\, a^{n_1+1}, \cdots, a^{n_{d-1}+1}, (l-1)a^{n_d}); \\
&\qquad \qquad  \quad M_{d-1}(\m;\, a^{n_1+1}, \cdots, a^{n_{d-1}+1},(l-1)a^{n_d})\\
&\qquad \qquad\quad  + M_d(m_1, \ldots,m_{d-1}, m_d+(l-1)a^{n_d};\, a^{n_1+1},
\cdots,a^{n_{d-1}+1}, a^{n_d})\bigg\}.
\end{split}
\end{equation}
By using the elementary inequality $|x+y|^p\leq 2^{p-1}(|x|^p+|y|^p)$  we see that
 for $l=2,\cdots, a$,
\begin{equation}\label{Eq:Md}
\begin{split}
&M_{d}^p(\m;\, a^{n_1+1},\, \cdots, a^{n_{d-1}+1},l a^{n_d}) \\
& \leq (2^{p-1}-1)M^p_{d-1}(\m;\, a^{n_1+1},
\cdots, a^{n_{d-1}+1}, (l-1)a^{n_d}) \\
&  \qquad \quad + M^p_{d}(\m;\, a^{n_1+1},
\cdots, a^{n_{d-1}+1}, (l-1)a^{n_d}) \\
&  \qquad \quad + 2^{p-1}M^p_d(m_1, \,\cdots, m_{d-1},m_d
+(l-1)a^{n_d};\, a^{n_1+1}, \cdots, a^{n_{d-1}+1}, a^{n_d}).
\end{split}
\end{equation}
Define
\[
F_{d, d-1}(\n)
=\sup_{\m \in \N_0^d}\E\bigg( \frac{M_{d}^p(\m;\, a^{\n})-M^p_{d-1}(\m; \, a^{\n})} {\varphi_1(a^{n_1})^p\,\cdots \varphi_d(a^{n_d})^p}\bigg).
\]

In order to derive a recursion for $F_{d, d-1}(\n)$ we add and subtract terms in   \eqref{Eq:Md} to get
\begin{equation}\label{Eq:add-subtract}
\begin{split}
&M_{d}^p(\m;\, a^{n_1+1},\, \cdots, a^{n_{d-1}+1},l a^{n_d})-M_{d-1}^p(\m;\, a^{n_1+1},\, \cdots, a^{n_{d-1}+1}, l a^{n_d}) \\
& \leq -M_{d-1}^p(\m;\, a^{n_1+1},\, \cdots, a^{n_{d-1}+1},l a^{n_d})\\
&\quad +(2^{p-1}- 1)M^p_{d-1}(\m;\, a^{n_1+1},
\cdots, a^{n_{d-1}+1}, (l-1)a^{n_d}) \\
& \quad  + \big[M^p_{d}(\m;\, a^{n_1+1},
\cdots, a^{n_{d-1}+1}, (l-1)a^{n_d})\\
&\qquad \qquad \quad -M^p_{d-1}(\m;\, a^{n_1+1},
\cdots, a^{n_{d-1}+1}, (l-1)a^{n_d})\big]\\
&\quad   + 2^{p-1}\bigg[M^p_d(m_1,\cdots, m_{d-1},m_d
+(l-1)a^{n_d};\, a^{n_1+1}, \cdots, a^{n_{d-1}+1}, a^{n_d})\\
&\qquad\qquad \quad  -M^p_{d-1}(m_1,\cdots, m_{d-1},m_d
+(l-1)a^{n_d};\, a^{n_1+1}, \cdots, a^{n_{d-1}+1}, a^{n_d})\bigg]\\
&\quad + 2^{p-1}M^p_{d-1}(m_1,\cdots, m_{d-1},m_d
+(l-1)a^{n_d};\, a^{n_1+1}, \cdots, a^{n_{d-1}+1}, a^{n_d}).
\end{split}
\end{equation}

Inequality \eqref{Eq:add-subtract} is an analog of (1.7) in \cite{nane-xiao-zeleke}.
Then, by dividing both sides of \eqref{Eq:add-subtract} by
$$\varphi_1(a^{n_1+1})^p\,\cdots \varphi_{d-1}(a^{n_{d-1}+1})^p\varphi_d(la^{n_d})^p,$$
we arrive at a recursion similar to equation (1.9) in \cite{nane-xiao-zeleke}. Continuing as in the proof of Theorem 1.1
(the arguments  between Equations (1.7) and (1.10)) in \cite{nane-xiao-zeleke} and recalling the fact
$$
(1+(a-1)2^{p-1})\frac{\varphi_d(a^n)^p}{\varphi_d(a^{n+1})^p}\leq \frac{(1+(a-1)2^{p-1})}{C_{1,d}^{p\lfloor \log_2a\rfloor}}:= c_d<1,
$$
we obtain the following recursion
\[
\begin{split}
&F_{d, d-1}(n_1+1, \,\cdots ,n_d+1) \\
&\leq c_dF_{d, d-1}(n_1+1, n_2+1,\cdots , n_{d-1}+1, n_d)\\
&\quad +D_{a,p}\sup_{\m\in \N_0^d}\E\bigg(\frac{M^p_{d-1}(\m;a^{n_1+1},\, \cdots, a^{n_{d-1}+1}, a^{n_d})}
{\varphi_1(a^{n_1+1})^p \cdots \varphi_{d-1}(a^{n_{d-1}+1})^p\varphi_d(a^{n_d})^p}\bigg),
\end{split}
\]
where $D_{a,p}=2^{p-1}\big[(a-1)^{p-1}+(a-2)^{p-1}+\cdots+1+(a-1)\big]$.
By summing up this recursion over $\n$, we get
\begin{equation} \label{recursion-d-to-d-1}
\sum_{\n \in \N_0^d}   F_{d, d-1}(\n)
\leq  \frac{D_{a,p}}{1-c_d}\sum_{\n \in \N_0^d}
\sup_{\m\in \N_0^d}\E\bigg(\frac{M^p_{d-1}(\m;a^{n_1},  \cdots, a^{n_d})}
{\varphi_1(a^{n_1})^p \cdots \varphi_d(a^{n_d})^p}\bigg).
\end{equation}

In the same vein we derive a relation for $F_{s,s-1}(\n)$ similar to the inequality \eqref{recursion-d-to-d-1}.
For $l=2,\cdots,a$, we observe first that
 \begin{equation}\label{Eq:s}
 \begin{split}
&M_{s}(\m;a^{n_1+1}, \cdots, a^{n_{s-1}+1}, la^{n_s},a^{n_{s+1}+1},\cdots, a^{n_d+1} )\\
&\leq \max \bigg\{M_{s}(\m;a^{n_1+1}, \cdots , a^{n_{s-1}+1}, (l-1)a^{n_s},a^{n_{s+1}+1},\cdots, a^{n_d+1}),\\
&\qquad \quad M_{s-1}(\m;a^{n_1+1}, \cdots , a^{n_{s-1}+1},(l-1)a^{n_s},a^{n_{s+1}+1},\cdots, a^{n_d+1})\\
&\qquad \quad + M_s(m_1,\cdots , m_{s-1}, m_s+(l-1)a^{n_s}, m_{s+1},\cdots, m_d;\\
& \ \ \ \ \ \ \ \ \ \ \ \ \ \ \ \ \ \ \ \ \ \ \ \ \ \ a^{n_1+1}, \cdots, a^{n_{s-1}+1}, a^{n_s},a^{n_{s+1}+1},\cdots, a^{n_d+1})\bigg\}.
\end{split}
\end{equation}
It follows that for $l=2,\cdots,a$,
\[
\begin{split}
&M_{s}^p(\m;a^{n_1+1}, \cdots, a^{n_{s-1}+1},  la^{n_s},a^{n_{s+1}+1},\cdots, a^{n_d+1}) \\
&\leq  (2^{p-1}-1)M^p_{s-1}(\m;a^{n_1+1}, \cdots, (l-1)a^{n_s},a^{n_{s+1}+1},\cdots, a^{n_d+1})  \\
&\qquad + M^p_{s}(\m;a^{n_1+1}, , \cdots, a^{n_{s-1}+1} , (l-1)a^{n_s},a^{n_{s+1}+1},\cdots, a^{n_d+1})\nonumber\\
&\qquad + 2^{p-1}M^p_s(m_1,\cdots , m_{s-1}, m_s+(l-1)a^{n_s}, m_{s+1},\cdots, m_d; \\
           &\ \ \ \ \ \ \ \ \ \ \ \ \ \ \ \ \ \ \ \ \ \ \ \ \ \ \ \ a^{n_1+1}, \cdots, a^{n_{s-1}+1},  a^{n_s},a^{n_{s+1}+1},\cdots, a^{n_d+1}).
\end{split}
\]

By the assumption on $\varphi_s$ we have
$$
(1+(a-1)2^{p-1})\frac{\varphi_s(a^n)^p}{\varphi_s(a^{n+1})^p}\leq \frac{(1+(a-1)2^{p-1})}
{C_{1,s}^{p\lfloor \log_2a\rfloor}}:= c_s<1.
$$
By this fact, using similar arguments for deriving \eqref{Eq:add-subtract} and \eqref{recursion-d-to-d-1} 
we get the result of the lemma for $s=1, \cdots, d$.
\end{proof}

We are ready to prove Theorem \ref{main-thm2}.

\begin{proof}[{Proof of Theorem \ref{main-thm2}}]
We use the recursions obtained in \eqref{p-less-than-one} and \eqref{p-more-than-one}, together with Lemma  \ref{Toeplitz-lemma},
to prove the conclusion of the theorem.

\noindent {\bf Step 1:} We first show that the maximal sums  of dimension $s=1,2,\cdots, d$ of size $a^\n$
defined in (\ref{Eq:Ms}) converge to zero almost surely.  Recall that
$$
\big|S(\m; a^{n_1},\, \cdots, a^{n_d}) \big|= M_0(\m;a^{n_1},  \cdots ,  a^{n_d}).
$$
By assumption (\ref{Eq:MomCon1}), we have
$$
\sum_{\n \in \N_0^d }
\sup_{\m \in \N_0^d}\E\bigg(\frac{M_0^p(\m; a^{n_1},  \cdots,  a^{n_d})}
{\varphi_1(a^{n_1})^p \cdots \varphi_d(a^{n_d})^p}\bigg)<\infty.
$$
This,  \eqref{p-less-than-one} and \eqref{p-more-than-one} for $s=1$ imply that
$$
\sum_{\n \in \N_0^d}  \sup_{\m \in \N_0^d}\E\bigg(\frac{M^p_{1}(\m ; a^{n_1},
\cdots,  a^{n_d})}{\varphi_1(a^{n_1})^p \cdots
\varphi_d(a^{n_d})^p}\bigg)<\infty.
$$
By this last result and  Equations \eqref{p-less-than-one} and \eqref{p-more-than-one} for $s=2$, we obtain  
$$
\sum_{\n \in \N_0^d } \sup_{\m \in \N_0^d }\E\bigg(\frac{M^p_{2}(\m; a^{n_1},
\cdots,  a^{n_d})}{\varphi_1(a^{n_1})^p \cdots
\varphi_d(a^{n_d})^p}\bigg)<\infty.
$$

Continuing the same way in the order $s=3,   \cdots, d$ and using  \eqref{p-less-than-one} and
\eqref{p-more-than-one}  yields
\begin{equation}\label{all-finite-bound}
\sum_{\n \in \N_0^d}  \sup_{\m \in
\N_0^d}\E\bigg(\frac{M^p_{s}(\m; a^{n_1},
\cdots, a^{n_d})}{\varphi_1(a^{n_1})^p \cdots
\varphi_d(a^{n_d})^p}\bigg)<\infty
\end{equation}
 for every $s=1,\,\cdots, d$.

It follows from (\ref{all-finite-bound}) that for every $s=1,\,\cdots, d$. and $l_1, l_2,\cdots, l_d \in
\{0,1,\, \cdots, a-1\}$ almost surely
\begin{equation}\label{md-lim}
\frac{M_{s}(l_1a^{n_1},  \cdots, l_da^{n_d};a^{n_1},  \cdots, a^{n_d})}{\varphi_1(a^{n_1})
 \cdots \varphi_d(a^{n_d})}\to 0,\ \  \mathrm{as}\ |\n|\to \infty.
\end{equation}


\noindent {\bf Step 2:} We identify the terms to be used in   Lemma  \ref{Toeplitz-lemma}.

For any $k_1,\cdots, k_d\geq 0$ define
\begin{equation}\label{Eq:ak}
A(k_1,\cdots, k_d)=\sum_{j=1}^d\sum_{l_j=0}^{a-1}M_d\big(l_1a^{k_1},  \cdots, l_da^{k_d};a^{k_1},  \cdots , a^{k_d}\big).
\end{equation}
If $a^{n_1}\leq m_1<a^{n_1+1},  \, \cdots, a^{n_d}\leq m_d<a^{n_d+1}$ we have

\[
\begin{split}
\frac{|S(\0;\, \m )|}{\varphi_1(m_1)\cdots \varphi_d(m_d)}
&\leq  \sum_{\k \leq \n }\frac{A(k_1,\cdots, k_d)}{\varphi_1(a^{n_1})\cdots \varphi_d(a^{n_d})}\\
&\leq \frac{\varphi_1(a^{n_1+1}) \cdots \varphi_d(a^{n_d+1})}{\varphi_1(a^{n_1})\cdots \varphi_d(a^{n_d})}\sum_{\k \leq \n } w(\n; \k)\\
&\qquad \times\frac{A(k_1,\cdots, k_d)}{(\varphi_1(a^{k_1+1})-\varphi_1(a^{k_1}))\cdots (\varphi_d(a^{k_d+1})-\varphi_d(a^{k_d}))}\\
&=\frac{\varphi_1(a^{n_1+1})\cdots \varphi_d(a^{n_d+1})}{\varphi_1(a^{n_1})\cdots \varphi_d(a^{n_d})}\sum_{\k \leq \n}w(\n;\k)s(\k),
\end{split}
\]
where
\begin{equation}
\begin{split}
w(\n;\k)=
\frac{(\varphi_1(a^{k_1+1})-\varphi_1(a^{k_1})) \cdots (\varphi_d(a^{k_d+1})-\varphi_d(a^{k_d}))}
{\varphi_1(a^{n_1+1})\varphi_2(a^{n_2+1})\cdots \varphi_d(a^{n_d+1})}
\end{split}
\end{equation}
if $k_1\leq n_1,\cdots, k_d
\leq n_d$, and is defined to be $0$ otherwise, and where
$$
s(\k)=\frac{A(k_1,\cdots, k_d)}{(\varphi_1(a^{k_1+1})-\varphi_1(a^{k_1})) \cdots (\varphi_d(a^{k_d+1})-\varphi_d(a^{k_d}))}.
$$

Therefore, we need to verify the conditions of Lemma \ref{Toeplitz-lemma} for this $w(\n;\k)$ and  $s(\k)$.

\begin{enumerate}
\item It can be verified that
$$
\sum_{\k \geq {\bf 1}}w(\n;\k) \leq 1
$$
for all $\n \in \N_0^d$ and

\item $$
w(\n;\k) \to 0 \ \ \mathrm{ as }\ \ |\n| \to \infty
$$
for all $\k \in \N_0^d$.

\item Next we show that
$$
s(\k)\to 0 \ \mathrm{as} \ |\k|\to\infty.
$$

\end{enumerate}
It follows from \eqref{md-lim} and (\ref{Eq:ak}) that
\[
\begin{split}
\frac{A(k_1,\cdots, k_d)}{\varphi_1(a^{k_1}) \cdots \varphi_d(a^{k_d})}\to 0\ \ \mathrm{ as }\ |\k| \to \infty.
\end{split}
\]
By using the fact that for $s=1,\cdots, d$
\begin{equation}\label{phi-d}
\inf _n\frac{\varphi_s(a^{n+1})}{\varphi_s(a^{n})}\ge C_{3}^{\lfloor \log_2 a\rfloor} >1
\end{equation}
we get
\begin{equation}
\begin{split}
&\frac{A(k_1,\cdots, k_d)}{\varphi_1(a^{k_1}) \cdots \varphi_d(a^{k_d})}\\
&=\frac{(\varphi_1(a^{k_1+1})-\varphi_1(a^{k_1})) \cdots (\varphi_d(a^{k_d+1})-\varphi_d(a^{k_d}))}{\varphi_1(a^{k_1}) \cdots \varphi_d(a^{k_d})}\\
&\times\frac{A(k_1, \cdots, k_d)}{(\varphi_1(a^{k_1+1})-\varphi_1(a^{k_1})) \cdots (\varphi_d(a^{k_d+1})-\varphi_d(a^{k_d}))}\\
&\geq \big(C_{3}^{\lfloor \log_2 a\rfloor}-1\big)^d s(\k).
\end{split}
\end{equation}
Hence almost surely $s(\k) \to 0 \ \mathrm{as}\ |\k|\to \infty.$ Therefore we have verified all the conditions in Lemma \ref{Toeplitz-lemma}.

\noindent {\bf Step 3:} We combine the results in Steps 1 and 2 to finish the proof.
To this end, we notice that, by assumption 
\begin{equation*}\label{phi-3b}
\frac{\varphi_s(a^{n+1})}{\varphi_s(a^{n})}\leq C_{4}^{\lfloor \log_2 a\rfloor}
\end{equation*}
for $s=1,\, \cdots, d$. It follows from Lemma \ref{Toeplitz-lemma} that
\[
\begin{split}
&\frac{|S(\0;\, \m )|}{\varphi_1(m_1) \cdots \varphi_d(m_d)}\\
&\leq \frac{\varphi_1(a^{n_1+1}) \cdots \varphi_d(a^{n_d+1})}
{\varphi_1(a^{n_1}) \cdots \varphi_d(a^{n_d})}\sum_{\k\leq \n}w(\n; \k)
s(\k)\\
&\leq  C_{4}^{\lfloor \log_2 a\rfloor d}\, \sum_{\k\leq \n}w(\n; \k)
s(\k)\to 0\ \ \ \ \mathrm{ as } \ |\n| \to\infty.
\end{split}
\]
Therefore, we have
$$
\frac{S(\0; \m) }{\varphi_1(m_1) \cdots
\varphi_d(m_d)}\to 0\ \ \mathrm{ as }\ \ |\m|\to \infty.
$$
This finishes the proof of Theorem \ref{main-thm2}.
\end{proof}

The previous proof  can be extended  to give a proof of the following result. Notice that it is slightly
more general than Theorem \ref{main-thm2}
because condition (\ref{Eq:MomCon1-2}) is more flexible than condition \eqref{Eq:MomCon1}
\begin{theorem}\label{Th:SLLN-2}
Let  $\varphi_s:\rr_+\to \rr_+$ ($s=1,\,\cdots, d$) be functions as in Theorem  \ref{main-thm2}.
Assume that $a_s\geq 2$ ($s=1,\,\cdots, d$) are integers that satisfy
$C_{3}^{\lfloor \log_2 a_s \rfloor}>\max\{a_s, a_s2^{p-1}\}$. If we have
\begin{equation}\label{Eq:MomCon1-2}
\sum_{\n \in \N_0^d}
\sup_{\m \in \N_0^d}\E\bigg(\frac{|S(\m;a_1^{n_1}, \, \cdots, a_d^{n_d})|^p}
{\varphi_1(a_1^{n_1})^p \cdots \varphi_d(a_d^{n_d})^p}\bigg)<\infty,
\end{equation}
then $\{\xi(\n), \n \in \N_0^d\}$ satisfies the SLLN (\ref{Def:SLLN}) with $ \Gamma_\n
= [{\bf 1},\, \n]$ and $\varphi(\n) = \varphi_1(n_1)\cdots \varphi_d(n_d)$.
\end{theorem}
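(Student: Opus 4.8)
The plan is to mimic the proof of Theorem~\ref{main-thm2} almost verbatim, replacing the common base $a$ by the coordinate-dependent bases $a_s$ throughout. First I would redefine the relevant $s$-dimensional maximal sums and the quantities $F_{s,s-1}$ with $a^{\n}$ replaced by $(a_1^{n_1},\ldots,a_d^{n_d})$; concretely, set
\[
M_{s}(\m;\, \n) = \max_{1\leq k_1 \leq n_1, \cdots, 1\leq k_s\leq n_s}
|S(m_1,\cdots, m_d;\, k_1,\cdots, k_s, n_{s+1}, \cdots, n_d)|
\]
as before, and
\[
F_{s,s-1}(\n)=\sup_{\m\in \N_0^d}\E\bigg(\frac{M_{s}^p(\m;\,a_1^{n_1},\cdots,a_d^{n_d})-M^p_{s-1}(\m;\,a_1^{n_1},\cdots,a_d^{n_d})}{\varphi_1(a_1^{n_1})^p \cdots \varphi_d(a_d^{n_d})^p}\bigg).
\]
Then I would rerun the two maximal-moment lemmas. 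The splitting inequalities \eqref{Eq:Mlp1}, \eqref{Eq:Mlp3} and \eqref{Eq:s} are purely combinatorial (they only use that $la^{n_s}$ is being split into $(l-1)a^{n_s}$ plus one more block of length $a^{n_s}$), so they carry over with $a$ replaced by $a_s$ in the $s$-th coordinate. The only arithmetic input that must be re-checked is the contraction estimate: in coordinate $s$ one needs
\[
(1+(a_s-1)2^{p-1})\,\frac{\varphi_s(a_s^{n})^p}{\varphi_s(a_s^{n+1})^p}\leq \frac{1+(a_s-1)2^{p-1}}{C_{3}^{p\lfloor \log_2 a_s\rfloor}}:=c_s<1,
\]
which is exactly what the hypothesis $C_{3}^{\lfloor \log_2 a_s\rfloor}>\max\{a_s,a_s2^{p-1}\}$ guarantees (for $0<p\le 1$ one instead needs $a_s/C_{3}^{p\lfloor\log_2 a_s\rfloor}<1$, again covered). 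With $D_{a_s,p}=2^{p-1}\big[(a_s-1)^{p-1}+\cdots+1+(a_s-1)\big]$ the recursions and their summed forms \eqref{recursion-d-to-d-1}, \eqref{p-more-than-one}, \eqref{p-less-than-one} go through with $\kappa$ and $D$ replaced by $\max_s$ over the coordinatewise constants; summing the telescoping bound over $s=1,\ldots,d$ and feeding in hypothesis \eqref{Eq:MomCon1-2} yields, exactly as in Step~1 of the proof of Theorem~\ref{main-thm2},
\[
\sum_{\n \in \N_0^d}  \sup_{\m \in \N_0^d}\E\bigg(\frac{M^p_{s}(\m; a_1^{n_1},\cdots, a_d^{n_d})}{\varphi_1(a_1^{n_1})^p \cdots \varphi_d(a_d^{n_d})^p}\bigg)<\infty
\]
for every $s=1,\ldots,d$, hence the a.s.\ convergence of the normalized $s$-dimensional maximal sums on the sublattice $\{(l_1a_1^{n_1},\ldots,l_da_d^{n_d})\}$.

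For Step~2, I would define $A(k_1,\ldots,k_d)$ and $s(\k)$ with $a^{k_j}$ replaced by $a_j^{k_j}$, and the Toeplitz weights by
\[
w(\n;\k)=\frac{\prod_{j=1}^d\big(\varphi_j(a_j^{k_j+1})-\varphi_j(a_j^{k_j})\big)}{\prod_{j=1}^d\varphi_j(a_j^{n_j+1})}
\]
for $\k\le\n$ and $0$ otherwise. The telescoping identity that gives $\sum_{\k}w(\n;\k)\le 1$ is unaffected; $w(\n;\k)\to 0$ as $|\n|\to\infty$ follows from $\varphi_j(a_j^{n_j+1})\ge C_3^{\lfloor\log_2 a_j\rfloor}\varphi_j(a_j^{n_j})\to\infty$; and $s(\k)\to 0$ follows from the $s=d$ case of the first paragraph together with the lower bound $A(k_1,\ldots,k_d)/\prod_j\varphi_j(a_j^{k_j})\ge \prod_j(C_3^{\lfloor\log_2 a_j\rfloor}-1)\,s(\k)$. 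Then Lemma~\ref{Toeplitz-lemma} applies. Step~3 is the same blocking argument: for $a_j^{n_j}\le m_j<a_j^{n_j+1}$ one bounds $|S(\0;\m)|/\prod_j\varphi_j(m_j)$ by $\prod_j\frac{\varphi_j(a_j^{n_j+1})}{\varphi_j(a_j^{n_j})}\cdot\sum_{\k\le\n}w(\n;\k)s(\k)\le \prod_j C_4^{\lfloor\log_2 a_j\rfloor}\cdot\sum_{\k\le\n}w(\n;\k)s(\k)\to 0$.

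I do not expect a genuine obstacle here: the argument is structurally identical to Theorem~\ref{main-thm2}, and the only place where $a$ being a single constant was used at all is in writing the various threshold and contraction constants, all of which localize to a single coordinate. The mildly delicate bookkeeping point — the step I would be most careful about — is making sure that in each of the lemmas the recursion in coordinate $s$ uses the constant $c_s$ (resp.\ $k_s$) and the multiplier $D_{a_s,p}$ attached to that coordinate, and that when one iterates the recursion across all $d$ coordinates the constants multiply/maximize correctly so that the final geometric series still converges; since each $c_s<1$ strictly, $\kappa=\max_s c_s<1$ and $D=\prod_s$ (or, more wastefully, $\sum_s$) of the coordinatewise $D_{a_s,p}$'s is finite, so convergence is preserved. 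Hence the proof of Theorem~\ref{main-thm2} applies line by line, with $a^{n_s}$ read as $a_s^{n_s}$, and gives Theorem~\ref{Th:SLLN-2}.
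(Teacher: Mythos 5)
Your proposal is correct and follows exactly the route the paper intends: the paper offers no separate proof of Theorem \ref{Th:SLLN-2}, stating only that the proof of Theorem \ref{main-thm2} extends, and your coordinate-by-coordinate replacement of $a$ by $a_s$ (with the contraction constants $c_s$, $k_s$ and multipliers $D_{a_s,p}$ localized to each coordinate) is precisely that extension. No gaps.
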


\section{Applications}
\label{Sec:Appl}

In this section we show that Theorems \ref{main-thm1}  and \ref{main-thm2} can be applied to
various discrete-time or continuous-time random fields.

\subsection{Quasi-stationary random fields}
Let $f:  \N_0^d \to \R_+$ be a non-negative function. Recall from Moricz \cite{Moricz-80}
that  a real-valued random field $\{\xi_{\n}, \n \in \N_0^d\}$ is \emph{$f$-quasi-stationary},
if $\E \big(\xi_{\n}^2 \big) = 1$ for all $\n\in \N_0^d$, and
$$
\big|\E (\xi_{\m} \xi_{\m +\n}) \big|\leq f(\n), \ \ \hbox{ for all} \ \m, \n \in \N_0^d,\  (\hbox{so we can take }\, f(\0) = 1).
$$

The following is a corollary of Theorem \ref{main-thm2} with $p = 2$. For simplicity,
we only consider the case $d = 2$.
\begin{corollary}
\label{corollary f-qstationary}
Let  $\{\xi_{n,m}, \, n,m\in \N_0\}$ be an $f$-quasi-stationary
sequence and $\varphi_{1}, \varphi_{2}$ be non-decreasing
functions as in Theorem \ref{main-thm2} such that
 $D:=\displaystyle \sum_{m=0}^{\infty}\sum_{n=0}^{\infty} \frac{a^n\;a^m}
 {\varphi_{1}(a^n)^2 \varphi_{2}(a^m)^2} <
 \infty$.
Define
$$h(i,j)\equiv \sum_{m=[\log _a j]}^{\infty}\sum_{n=[\log_a i]}^{\infty} \frac{a^n
a^m}{\varphi_{1}(a^n)^2 \varphi_{2}(a^m)^2}.
$$
If
\begin{equation} \label{Eq:fh-con}
 \sum_{j=1}^{\infty}\sum_{i=1}^{\infty} f(i,j)h(i,j)<\infty,
\end{equation}
then $\{\xi_{n,m}, n,m\in \N_0\}$ satisfies the SLLN on rectangles with
$\varphi(n,m) = \varphi_1(n)\varphi_2(m).$
\end{corollary}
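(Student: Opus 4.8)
The plan is to verify the hypothesis \eqref{Eq:MomCon1} of Theorem~\ref{main-thm2} (equivalently, the summability condition of Corollary~\ref{Co:2}) with $p = 2$ for the quasi-stationary random field $\{\xi_{n,m}\}$, using the second-moment bound on the rectangular partial sums that quasi-stationarity provides. First I would write, for $\m = (m_1, m_2)$ and $\n = (n_1, n_2)$,
\[
\E\big(|S(\m;\, \n)|^2\big) = \sum_{k_1, k_1' = m_1+1}^{m_1+n_1}\ \sum_{k_2, k_2' = m_2+1}^{m_2+n_2} \E\big(\xi_{k_1, k_2}\, \xi_{k_1', k_2'}\big),
\]
and bound each covariance by $f(|k_1 - k_1'|, |k_2 - k_2'|)$ using the $f$-quasi-stationarity assumption. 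Collecting terms according to the lag $(i, j) = (|k_1 - k_1'|, |k_2 - k_2'|)$ and counting multiplicities, this gives
\[
\E\big(|S(\m;\, \n)|^2\big) \le \sum_{i=0}^{n_1-1} \sum_{j=0}^{n_2-1} (2 - \delta_{i0})(2 - \delta_{j0})\, (n_1 - i)(n_2 - j)\, f(i, j) \le 4\, n_1 n_2 \sum_{i=0}^{n_1-1} \sum_{j=0}^{n_2-1} f(i, j),
\]
a bound that is uniform in $\m$, so one may take $g(\n) = 4\, n_1 n_2 \sum_{i < n_1, j < n_2} f(i,j)$ in Corollary~\ref{Co:2}.

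Next I would substitute $\n = (a^{n_1}, a^{n_2})$ and examine the series
\[
\sum_{n_1, n_2 \ge 0} \frac{g(a^{n_1}, a^{n_2})}{\varphi_1(a^{n_1})^2\, \varphi_2(a^{n_2})^2} = 4 \sum_{n_1, n_2 \ge 0} \frac{a^{n_1} a^{n_2}}{\varphi_1(a^{n_1})^2\, \varphi_2(a^{n_2})^2} \sum_{i < a^{n_1}} \sum_{j < a^{n_2}} f(i,j).
\]
The goal is to exchange the order of summation so that the inner sum over $(n_1, n_2)$ is recognized as $h(i,j)$. For a fixed lag $(i,j)$, the pair $(n_1, n_2)$ contributes precisely when $a^{n_1} > i$ and $a^{n_2} > j$, i.e. when $n_1 \ge \lceil \log_a(i+1)\rceil$ and $n_2 \ge \lceil \log_a(j+1)\rceil$; modulo the harmless difference between $\lceil \log_a(i+1) \rceil$ and $[\log_a i]$, the resulting tail sum is exactly $h(i,j)$ as defined in the statement. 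After this interchange the series becomes (a constant times) $\sum_{i,j} f(i,j)\, h(i,j)$, which is finite by hypothesis \eqref{Eq:fh-con}. Then Corollary~\ref{Co:2} (hence Theorem~\ref{main-thm2}) applies and yields the $\varphi$-SLLN on rectangles with $\varphi(n,m) = \varphi_1(n)\varphi_2(m)$, as claimed.

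The main obstacle, and the only place demanding care, is the Fubini-type interchange of summation and the bookkeeping of the index ranges: one must check that the lower summation limit $[\log_a j]$ appearing in the definition of $h$ matches (up to a bounded shift that can be absorbed into the constant $D$, using $\varphi_i(2x)/\varphi_i(x) < C_4$) the true threshold $\lceil \log_a(j+1) \rceil$ beyond which $a^{n_2}$ exceeds $j$, and similarly in the first coordinate; the $j = 0$ and $i = 0$ terms must also be handled so that $h(0,0) = D < \infty$ keeps the $f(0,0) = 1$ contribution finite. Everything else—the second-moment expansion, the covariance bound, and the counting of lag multiplicities—is routine, so I would present those steps compactly and devote the bulk of the write-up to justifying the reindexing.
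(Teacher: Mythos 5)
Your proposal is correct and follows essentially the same route as the paper: bound $\E|S(\m;\n)|^2$ via quasi-stationarity by (a constant times) $n_1 n_2\sum_{i<n_1,\,j<n_2}f(i,j)$, plug in $\n=(a^{n_1},a^{n_2})$, interchange the order of summation so the tail sum over $(n_1,n_2)$ is dominated by $h(i,j)$ (note the floor $[\log_a i]$ in the definition of $h$ starts the sum no later than the true threshold, so $h(i,j)$ is an upper bound, which is the direction needed), and invoke Corollary~\ref{Co:2}. The only cosmetic difference is that you track the lag-multiplicity factor $(2-\delta_{i0})(2-\delta_{j0})\le 4$ explicitly, which the paper absorbs silently.
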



\begin{proof}
For any $k,l,n,m \in \N_0$
\begin{equation}\label{Eq:3d}
\begin{split}
\E\left|\frac{S(k,l;\, a^n,a^m)}{\varphi_{1}(a^n)\varphi_{2}(a^m)}\right|^2 &\leq
\sum_{j=0}^{a^m}\sum_{i=0}^{a^n}\frac{f(i,j)(a^n-i)(a^m-j)}{\varphi_{1}(a^n)^2\varphi_{2}(a^m)^2}\\
&\leq
\frac{a^n\;a^m}{\varphi_{1}(a^n)^2\varphi_{2}(a^m)^2}
\sum_{j=0}^{a^m}\sum_{i=0}^{a^n}f(i,j).
\end{split}
\end{equation}
This implies
\begin{equation}\label{Eq:3e}
\begin{split}
&\sum_{m=0}^{\infty}\sum_{n=0}^{\infty}\sup_{k,l\in \N_0} \E \left(
\frac{S(k,l;\, a^{n},a^{m})}{\varphi_{1}(a^n)\varphi_{2}(a^m)}\right)^2
\leq  \sum_{m=0}^{\infty}\sum_{n=0}^{\infty} \sum_{j=0}^{a^m}
\sum_{i=0}^{a^n}\frac{a^n\;a^m f(i,j)}{\varphi_{1}(a^n)^2\varphi_{2}(a^m)^2}\\
&\leq  D f(0,0) +\sum_{j=1}^{\infty}\sum_{i=1}^{\infty} f(i,j)
\sum_{m=[\log _a j]}^{\infty}\sum_{n=[\log_a i]}^{\infty}\frac{a^n a^m}
{\varphi_{1}(a^n)^2\varphi_{2}(a^m)^2}\\
&\leq  D f(0,0)+\sum_{j=1}^{\infty}\sum_{i=1}^{\infty}
f(i,j)h(i,j)<\infty.
\end{split}
\end{equation}
Hence the conclusion follows from Theorem \ref{main-thm2}.
\end{proof}

By using Corollary \ref{corollary f-qstationary}, we can improve a result of M\'oricz \cite{Moricz-89} on SLLN
of a class of quasi-orthogonal random fields.
Recall from\cite{Moricz-89} that a random field $\{X_{i,k}\}$ is called quasi-orthogonal if
$\E(X_{i,k}^2) = \sigma_{i,k}^2 < \infty $ for all $i,k \geq 1$ and there exists a double sequence
$\{\rho(m,n): \;m,n \geq 0 \}$ of nonnegative numbers such that
$$
| \E(X_{i,k}X_{j,l})| \leq \rho(|i-j|,|k-l|) \sigma_{i,k} \sigma_{j,l},\ \  \ ( \forall \ i,j,k,l\, \geq 1)
$$
and
\begin{equation}
\label{con:rho}
\sum_{m=0}^{\infty} \sum_{n=0}^{\infty} \rho(m,n) < \infty.
\end{equation}

Suppose $\{\lambda_{j}(m), m \in \N_0\}$ ($j = 1, 2$) are nondecreasing sequences of
positive  numbers  such that $ \displaystyle \liminf_{m \rightarrow
\infty}\frac{\lambda_{j}(2m)}{\lambda_{j}(m)}> 1, \; \mbox{for}\;j=1, 2\;\mbox{ and let }\;
\{ X_{ik}\}\;\mbox{ be a quasi-orthogonal random field}$.  M\'oricz \cite[Theorem 1 ]{Moricz-89}
showed that if
\begin{equation}
\label{quasi-orthogonal}
\sum_{i=1}^{\infty}\sum_{k=1}^{\infty}\frac{\sigma_{ik}^2}{\lambda_{1}^2(i)\lambda_{2}^2(k)}
(\log(i+1))^2(\log(k+1))^2 < \infty,
\end{equation}
then
\begin{equation}\label{Eq:M89}
\displaystyle \lim_{m +n \rightarrow
  \infty}\frac{1}{\lambda_{1}(m)\lambda_{2}(n)}\displaystyle
  \sum_{i=1}^{m}\sum_{k=1}^{n}X_{ik}=0\;\ \ \ \mbox{ a.s.}
\end{equation}
If we further assume that  $\sigma_{i,j}^2=1$  for all $i,k \geq 1$, then $\{X_{i,k}\}$ is
$\rho$-quasi-stationary. By taking $\varphi_j = \lambda_j$ ($j = 1, 2$) in Corollary \ref{corollary f-qstationary},
and noticing that \eqref{Eq:fh-con} is satisfied due to (\ref{con:rho}), and $\displaystyle
\sum_{m=0}^{\infty} \frac{a^n} {\varphi_{j}(a^n)^2} <
 \infty$ if and only if $\displaystyle \sum_{m=0}^{\infty} \frac{1}
 {\varphi_{j}(n)^2 } <  \infty$, we conclude that (\ref{Eq:M89}) holds provided
 $ \displaystyle  \sum_{k=1}^{\infty}\frac{1}{\lambda_{j}^2(k) } < \infty $ for  $ j = 1, 2.$




\subsection{Orthogonal random fields}

Following Klesov \cite{K82,Klesov03}, a random field $\{\xi(\n), \n \in \N_0^d\}$ is
said to be \emph{orthogonal} if
\[
\E(\xi(\n)) = 0, \ \E\big[\xi(\n)^2\big] < \infty \ \ \hbox{ and }\  \
\E\big[\xi(\m)\xi(\n)\big] =0 \ \hbox{ if } \m \ne \n.
\]
Klesov \cite{K82,Klesov03} proved that, if
\begin{equation}\label{Eq:Klesov}
\sum_{\n \in \N^d} \E(\xi(\n)^2)\, \prod_{i=1}^d
\bigg(\frac{\log (1 + n_i)} {n_i}\bigg)^2 < \infty,
\end{equation}
then
\begin{equation}\label{Eq:Klesov2}
\lim_{|\n| \to \infty} \frac{S(\0, \n)} {\prod_{i=1}^d n_i} = 0 \qquad \hbox{ a.s.}
\end{equation}

As a consequence of Corollary  \ref{Co:2}, we show that, under an extra condition (\ref{Eq:Klesov3})
which is weaker than quasi-stationarity, condition (\ref{Eq:Klesov}) can be weakened.

\begin{corollary}
Let $\{\xi(\n), \n \in \N_0^d\}$ be an orthogonal random field. If there exists a
finite constant $C_5$ such that
\begin{equation}\label{Eq:Klesov3}
\sup_{\m \in \N_0^d} \E\big(S(\m, 2^{\n})^2\big) \le C_5\, \E\big(S(\0, 2^{\n})^2\big)
\end{equation}
for all $\n \in \N^d$ and
\begin{equation}\label{Eq:Klesov4}
\sum_{\n \in \N^d} \frac {\E(\xi(\n)^2)} {\prod_{i=1}^d
n_i^2} < \infty,
\end{equation}
then (\ref{Eq:Klesov2}) holds.
\end{corollary}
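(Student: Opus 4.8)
The plan is to deduce the corollary from Corollary \ref{Co:2} (equivalently, from Theorem \ref{main-thm2}), applied with $p=2$, with the coordinate functions taken to be $\varphi_s(x)=x$ for $s=1,\dots,d$ — so that $\varphi(\n)=\prod_{i=1}^{d}n_i$ and the resulting SLLN is exactly \eqref{Eq:Klesov2} — and with $a$ an integer power of $2$, large enough to satisfy the growth constraint of Theorem \ref{main-thm2}; since $\varphi_s(2x)/\varphi_s(x)\equiv 2$, the constant $C_3$ may be chosen as close to $2$ as needed and one then picks $a=2^{j}$ with $j$ large. The reason for insisting that $a$ be a power of $2$ is that the hypothesis \eqref{Eq:Klesov3}, which is stated for the dyadic cubes $2^{\n}$, then continues to hold with $2^{\n}$ replaced by $a^{\n}=2^{j\n}$.

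The driving observation is that orthogonality makes second moments of rectangular partial sums additive: since $\E[\xi(\n)]=0$ and $\E[\xi(\m)\xi(\n)]=0$ for $\m\neq\n$,
\[
\E\big(S(\m;\,\n)^2\big)=\sum_{\k\in(\m,\,\m+\n]}\E\big(\xi(\k)^2\big)\qquad\text{for all }\m,\n,
\]
and in particular $\E\big(S(\0;\,a^{\n})^2\big)=\sum_{\k\in[{\bf 1},\,a^{\n}]}\E\big(\xi(\k)^2\big)$. Combining this with \eqref{Eq:Klesov3} gives, for every $\n$,
\[
\sup_{\m\in\N_0^d}\E\big(S(\m;\,a^{\n})^2\big)\le C_5\,\E\big(S(\0;\,a^{\n})^2\big)=C_5\sum_{\k\in[{\bf 1},\,a^{\n}]}\E\big(\xi(\k)^2\big).
\]
Dividing by $\prod_{i=1}^{d}a^{2n_i}$ and summing over $\n$, one sees that verifying condition \eqref{Eq:MomCon1} (equivalently, the series condition of Corollary \ref{Co:2} with $g(\n)=\sup_{\m}\E(S(\m;\n)^2)$) reduces to the estimate $\displaystyle\sum_{\n}\frac{\sum_{\k\in[{\bf 1},\,a^{\n}]}\E(\xi(\k)^2)}{\prod_{i=1}^{d}a^{2n_i}}<\infty$.

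This estimate follows by interchanging the sums over $\n$ and $\k$. For fixed $\k$ the inner sum runs over all $\n$ with $a^{n_i}\ge k_i$, that is $n_i\ge\lceil\log_a k_i\rceil$, so
\[
\sum_{\n}\frac{\sum_{\k\in[{\bf 1},\,a^{\n}]}\E(\xi(\k)^2)}{\prod_{i=1}^{d}a^{2n_i}}
=\sum_{\k}\E(\xi(\k)^2)\prod_{i=1}^{d}\ \sum_{n_i\ge\lceil\log_a k_i\rceil}a^{-2n_i}
=\Big(\tfrac{a^2}{a^2-1}\Big)^{d}\sum_{\k}\E(\xi(\k)^2)\prod_{i=1}^{d}a^{-2\lceil\log_a k_i\rceil}.
\]
Since $a^{-2\lceil\log_a k_i\rceil}\le a^{-2\log_a k_i}=k_i^{-2}$, the right-hand side is at most $\big(\tfrac{a^2}{a^2-1}\big)^{d}\sum_{\k}\E(\xi(\k)^2)\big/\prod_{i=1}^{d}k_i^{2}$, which is finite by \eqref{Eq:Klesov4}. (For brevity this treats $\n\in\N^d$; the remaining terms of \eqref{Eq:MomCon1} with some $n_i=0$, where the relevant block has width $1$ in that direction, are absorbed in the same way.) Hence \eqref{Eq:MomCon1} holds and Theorem \ref{main-thm2} delivers \eqref{Eq:Klesov2}.

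Most of this is bookkeeping once the two ingredients are in place: orthogonality for the additivity of the second moment, and \eqref{Eq:Klesov3} to trade the supremum over translates for the single unshifted block $\E(S(\0;a^{\n})^2)$. The step most in need of care — and the closest thing to an obstacle — is the choice of parameters: $\varphi_s(x)=x$ to make the normalization the cube $\prod n_i$, and $a$ a power of $2$ so that \eqref{Eq:Klesov3} is available along the scale $a^{\n}$, after which the per-coordinate geometric series reassembles precisely the factor $k_i^{-2}$ appearing in \eqref{Eq:Klesov4}. One also sees here the role of the extra hypothesis \eqref{Eq:Klesov3}: it is exactly what lets the unshifted block, and hence the whole sum, be controlled by $\sum_{\n}\E(\xi(\n)^2)/\prod_i n_i^{2}$ without the additional logarithmic factors present in Klesov's condition \eqref{Eq:Klesov}.
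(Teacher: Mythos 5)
Your proposal is correct and takes essentially the same route as the paper's own proof: apply Corollary \ref{Co:2} with $p=2$ and $\varphi_i(x)=x$, use orthogonality to write $\E\big(S(\m;\n)^2\big)$ as the sum of the $\E\big(\xi(\k)^2\big)$ over the block, invoke \eqref{Eq:Klesov3} to replace the supremum over $\m$ by the unshifted block, and interchange the order of summation so that the per-coordinate geometric series produces exactly the factor $k_i^{-2}$ appearing in \eqref{Eq:Klesov4}. The only difference is your explicit discussion of the admissible base $a$ (the paper silently takes $a=2$); your choice $a=2^j$ is the right instinct, though to justify it one must appeal to the condition $C_3^{\,p\lfloor\log_2 a\rfloor}>1+(a-1)2^{p-1}$ actually used in the proof of Theorem \ref{main-thm2}, since the constraint as literally stated there is not satisfiable for $\varphi_s(x)=x$ and $p=2$.
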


\begin{proof} We take $p = 2$,  $g(\n) = C_5\, \E\big(S(\0, 2^{\n})^2\big)$,
and  $\varphi(\n) = n_1 \cdots n_d$ in Corollary  \ref{Co:2}. Then by the orthogonality we derive
\begin{equation}\label{Eq:Klesov5}
\begin{split}
\sum_{\n \in \N^d}  \frac{g( 2^{\n})} {\varphi(2^{\n})^{2}}
&= \sum_{\n \in \N^d}  \frac{\sum_{\k \le 2^{\n}} \E\big(\xi(\k)^2\big)} {\prod_{i=1}^d
2^{2n_i}}\\
&= \sum_{\k \in \N^d} \E\big(\xi(\k)^2\big) \sum_{\n \ge \lfloor \log_2 {\k} \rfloor} \frac{1} {\prod_{i=1}^d
2^{2n_i}} \\
&\le C\, \sum_{\k \in \N^d} \frac{\E\big(\xi(\k)^2\big)} {\prod_{i=1}^d
k_i^2}   < \infty.
\end{split}
\end{equation}
In the above, $ \lfloor \log_2 {\k} \rfloor = ( \lfloor \log_2 {k_1} \rfloor, \ldots,
\lfloor \log_2 {k_d} \rfloor)$ and the last inequality follows from (\ref{Eq:Klesov4}).
Hence the conclusion follows from Corollary  \ref{Co:2}.
\end{proof}

\subsection{Fractional stable random fields}
In recent years, several authors have studied local and asymptotic 
properties of isotropic and anisotropic fractional stable random fields; 
see for example, Bierm\'e and Lacaux \cite{bierme2009, bierme2015},
Panigrahi, Roy and Xiao \cite{PRX17}, Roy and Samorodnitsky \cite{RoySa08}, 
and Xiao \cite{Xiao10, Xiao08}. We refer to the books of Samorodnitsky 
and Taqqu \cite{ST94}, Cohen and Istas \cite{CI13}, Pipiras and Taqqu \cite{PT17}
for systematic and historical accounts on stable distributions and stable 
random fields.

In this section, we study SLLN for linear fractional stable sheets 
considered in Ayache, Roueff and Xiao \cite{ARX08}. This was the main 
motivation that initiated this paper.

For any given $0 < \alpha < 2$ and $H = (H_1,$ $ \ldots, H_d)\in (0,
1)^d$, we define an $\a$-stable random field $Z^{H} = \{Z^{
H}(\t), \t \in \R_+^d\}$ with values in $\R$ by
\begin{equation}\label{Eq:Rep-LFSS}
Z^{H}(\t) = \int_{\R^d} g_{_{H}}(\t, \s)\, M_\a(d\s),
\end{equation}
where $M_\a$ is a symmetric $\alpha$-stable random measure on $\R^d$ with
Lebesgue control measure and
\begin{equation}\label{Eq:functh}
\begin{split}
g_{_H} (\t, \s) &= \kappa \prod_{\ell=1}^d
\Big\{\big((t_\ell-s_\ell)_+ \big)^{H_\ell-1/\a} -
\big((-s_\ell)_+\big)^{H_\ell-1/\a} \Big\}.
\end{split}
\end{equation}
In the above $t_{+} = \max\{t, 0\}$, and $\kappa > 0$ is a normalizing constant such that
the scale parameter of $Z^{H}(\t)$ equals $\|\t\|^H$. $Z^{H}= \{Z^{H}(\t), \t \in \R_+^d\}$ is
called an $(d,1, \alpha)$-linear fractional stable sheet. Using (\ref{Eq:Rep-LFSS}) one
can verify that it has the following operator-scaling
property: For any $d\times d$ diagonal matrix $E = (b_{ij})$ with
$b_{ii} = b_i > 0$ for all $1 \le i \le d$ and $b_{ij}=0$ if $i\ne j$, we have
\begin{equation}\label{Eq:OSS}
\big\{  Z^{H}(E \t),\, \t \in \R^d \big\} \stackrel{d}{=} \bigg\{
\bigg(\prod_{j=1}^N b_j^{H_j}\bigg)\,  Z^{H}(\t),\ \t \in \R^d \bigg\}.
\end{equation}
Moreover, along  each direction of $\R^d_+$, ${Z}^{H}$ becomes a
(rescaled) real-valued linear fractional stable motion. See  Samorodnitsky and Taqqu \cite{ST94} 
or Pipiras and Taqqu \cite{PT17} for more information on the
later and other self-similar stable processes.

Ayache, Roueff and Xiao \cite{ARX08} have studied asymptotic properties,
modulus of continuity, fractal dimensions and local times of linear fractional
stable sheets. The following SLLN in equation \eqref{Eq:LFSS} was proved in Theorem 1 in \cite{ARX08}
by using the wavelet method. Here we show that it can also be obtained by
using Theorem \ref{main-thm2}.

We start with the following moment and tail inequalities for the maximal increments
of $Z^{H}$. These inequalities are of interest in their own and will have applications
beyond the scope of this paper, see Panigrahi, Roy and Xiao \cite{PRX17} and Xiao
\cite{Xiao10} for some related results.

\begin{lemma}\label{Lem:stablesuptail}
Let ${Z}^{H}$ be a $(d, 1, \alpha)$-LFSS as above.  Assume that
$1 < \alpha < 2$ and  $ 1/\alpha < H_j < 1$ for every $j = 1, \ldots, d$. Then for
any constants $\eps > 0$ and $0 < \gamma < \alpha$, there exists a positive
and finite constant $C_6$ such that for all intervals $T = [{\bf a}, {\bf b}] \subseteq
[\eps,\infty)^d$,
\begin{equation}\label{Eq:Moricz2}
\E\bigg(\sup_{t \in T} \big|Z^H(\t) - Z^H({\bf a})\big|^\gamma \bigg) \le C_6\,\bigg(\prod_{j=1}^d (b_j - a_j)^{H_j\gamma}
+ \sum_{k=1}^d a_k^{H_k \gamma} \prod_{j \ne k} (b_j - a_j)^{H_j \gamma}\bigg).
\end{equation}
Hence, for any $u > 0$,
\begin{equation}\label{Eq:maxtail}
\begin{split}
&\P\bigg(\sup_{\t \in T} \big|Z ^H(\t) - Z^H({\bf a})\big| \geq u\bigg)\\
& \leq
C_6 u^{-\gamma}  \bigg(\prod_{j=1}^d (b_j - a_j)^{H_j\gamma}
+ \sum_{k=1}^d a_k^{H_k \gamma} \prod_{j \ne k} (b_j - a_j)^{H_j \gamma}\bigg).
\end{split}
\end{equation}
\end{lemma}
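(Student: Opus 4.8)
The plan is to deduce the tail bound \eqref{Eq:maxtail} from the moment bound \eqref{Eq:Moricz2} by a single application of Markov's inequality,
$$
\P\Big(\sup_{\t\in T}\big|Z^H(\t)-Z^H({\bf a})\big|\ge u\Big)\le u^{-\ga}\,\E\Big(\sup_{\t\in T}\big|Z^H(\t)-Z^H({\bf a})\big|^\ga\Big),
$$
so the whole task reduces to \eqref{Eq:Moricz2}. The one elementary ingredient is that a symmetric $\a$-stable integral satisfies $\E\big|\int_{\R^d}f\,dM_\a\big|^\ga=c_{\a,\ga}\,\|f\|_{L^\a(\R^d)}^\ga$ for every $0<\ga<\a$; the genuine issue is the supremum over the rectangle $T$ in \eqref{Eq:Moricz2}, which must be handled by a continuity/chaining argument built on top of this identity.

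First I would expose the algebraic structure of the increment. Writing $g_{_H}(\t,\s)=\kappa\prod_{\ell=1}^d h_\ell(t_\ell,s_\ell)$ with $h_\ell(t,s)=\big((t-s)_+\big)^{H_\ell-1/\a}-\big((-s)_+\big)^{H_\ell-1/\a}$, the telescoping identity for products gives
$$
Z^H(\t)-Z^H({\bf a})=\sum_{\emptyset\ne I\subseteq\{1,\dots,d\}}\De^I_{\t},\qquad
\De^I_{\t}=\int_{\R^d}\Big(\prod_{\ell\notin I}h_\ell(a_\ell,s_\ell)\Big)\Big(\prod_{\ell\in I}\big[h_\ell(t_\ell,s_\ell)-h_\ell(a_\ell,s_\ell)\big]\Big)M_\a(d\s),
$$
the mixed rectangular increment of $Z^H$ in the coordinates of $I$, the others pinned at ${\bf a}$. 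Its spectral kernel is a product over coordinates, so its $L^\a$-norm factorizes; the translation invariance of $M_\a$ (stationarity of one-parameter linear fractional stable motion increments, so that $\|h_\ell(t,\cdot)-h_\ell(a,\cdot)\|_\a$ depends only on $|t-a|$) together with the self-similarity $h_\ell(\la t,\la s)=\la^{H_\ell-1/\a}h_\ell(t,s)$ for $\la>0$ give $\|h_\ell(t,\cdot)-h_\ell(a,\cdot)\|_\a\asymp|t-a|^{H_\ell}$ and $\|h_\ell(a,\cdot)\|_\a\asymp a^{H_\ell}$, and show that $\{\De^I_{\t}:\t\in T\}$ agrees in law with an $|I|$-parameter linear fractional stable sheet in the variables $(t_\ell-a_\ell)_{\ell\in I}\in\prod_{\ell\in I}[0,b_\ell-a_\ell]$, whose stable integrator is modulated by the fixed kernel $\prod_{\ell\notin I}h_\ell(a_\ell,\cdot)$ of $L^\a$-size comparable to $\prod_{\ell\notin I}a_\ell^{H_\ell}$. (Equivalently one may first reduce to the corner ${\bf a}={\bf 1}$ using the operator-scaling property \eqref{Eq:OSS}; that is one way to see the two families of terms in \eqref{Eq:Moricz2} appear.)

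The core estimate is therefore a supremum-moment bound for linear fractional stable sheets: for an $(m,1,\a)$-LFSS $Y$ with Hurst vector $(\beta_1,\dots,\beta_m)$, possibly with its stable integrator modulated by a fixed deterministic kernel of $L^\a$-norm $\rho$,
$$
\E\Big(\sup_{{\bf 0}\le{\bf q}\le{\bf h}}|Y({\bf q})|^\ga\Big)\le C\,\rho^\ga\prod_{\ell=1}^m h_\ell^{\beta_\ell\ga},\qquad 0<\ga<\a.
$$
I would prove this in two moves. By Jensen's inequality $\E(\sup|Y|^\ga)\le\big(\E(\sup|Y|^{\ga'})\big)^{\ga/\ga'}$ for $\ga<\ga'<\a$, so it is enough to treat one exponent $\ga'$ as close to $\a$ as one wishes; in particular one may assume $\ga'\beta_\ell>1$ for all $\ell$, which is possible precisely because $\beta_\ell>1/\a$. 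Then, for such $\ga'$, I would run a chaining argument over a product of dyadic grids that refines a single coordinate at a time, controlling each elementary oscillation by the one-parameter increment moment $\asymp(\text{grid spacing})^{\beta_\ell\ga'}$ and summing the resulting series over scales. This step is the main obstacle: the heavy tails rule out Gaussian-type chaining, and a plain $m$-dimensional Kolmogorov--Chentsov estimate is unavailable once $m\ge2$ (one has $\ga'\beta_\ell<\a<2\le m$), so the multiplicative (sheet) structure of the kernel has to be exploited by chaining coordinate by coordinate — alternatively one can invoke the modulus-of-continuity bounds for linear fractional stable sheets in Ayache, Roueff and Xiao \cite{ARX08}.

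Finally, I would apply this bound to each $\De^I$ (with $m=|I|$, the Hurst indices $(H_\ell)_{\ell\in I}$, $h_\ell=b_\ell-a_\ell$ for $\ell\in I$, and $\rho\asymp\prod_{\ell\notin I}a_\ell^{H_\ell}$), use the subadditivity $\big(\sum_{\emptyset\ne I}X_I\big)^\ga\le(2^d)^{(\ga-1)_+}\sum_{\emptyset\ne I}X_I^\ga$ with $X_I=\sup_{\t\in T}|\De^I_{\t}|$, and bound $t_\ell-a_\ell\le b_\ell-a_\ell$ on $T$, to obtain
$$
\E\Big(\sup_{\t\in T}\big|Z^H(\t)-Z^H({\bf a})\big|^\ga\Big)\le C\sum_{\emptyset\ne I\subseteq\{1,\dots,d\}}\Big(\prod_{\ell\notin I}a_\ell^{H_\ell\ga}\Big)\Big(\prod_{\ell\in I}(b_\ell-a_\ell)^{H_\ell\ga}\Big),
$$
with $C$ uniform over $T\subseteq[\eps,\infty)^d$; for $d\le2$ the right-hand side here is precisely that of \eqref{Eq:Moricz2}. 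The tail estimate \eqref{Eq:maxtail} then follows from the Markov inequality stated at the outset, with the same constant $C_6$.
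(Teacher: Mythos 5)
Your overall strategy coincides with the paper's: the tail bound \eqref{Eq:maxtail} is obtained from \eqref{Eq:Moricz2} by Markov's inequality; the corner increment $Z^H(\t)-Z^H({\bf a})$ is decomposed into rectangular increments of $Z^H$ with some coordinates pinned at ${\bf a}$; each pinned field is identified, via the product form of the kernel $g_{_H}$ and the scaling $\|h_\ell(a_\ell,\cdot)\|_{L^\alpha}\asymp a_\ell^{H_\ell}$, as a constant multiple of a lower-dimensional LFSS (this is exactly the paper's \eqref{Eq:Y}); and each rectangular increment is controlled by the maximal moment bound for LFSS. For that core estimate the paper simply cites (5.21) in the proof of Lemma 20 of \cite{ARX08} rather than re-deriving it by coordinate-by-coordinate chaining; your fallback to the same citation is the sensible choice, since the chaining step is where all the hidden work lies.

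The substantive difference is the decomposition, and here your version is the correct one. You use the full inclusion--exclusion $Z^H(\t)-Z^H({\bf a})=\sum_{\emptyset\ne I}\Delta^I_{\t}$ with $2^d-1$ terms and arrive at
\[
\E\Big(\sup_{\t\in T}\big|Z^H(\t)-Z^H({\bf a})\big|^\gamma\Big)\le C\sum_{\emptyset\ne I\subseteq\{1,\dots,d\}}\prod_{\ell\notin I}a_\ell^{H_\ell\gamma}\prod_{\ell\in I}(b_\ell-a_\ell)^{H_\ell\gamma},
\]
whereas the paper regroups the remainder after subtracting $Z^H([{\bf a},\t])$ into only $d$ terms, each treated as a $(d-1)$-dimensional \emph{rectangular} increment of a pinned field; for $d\ge 3$ that regrouping is not an identity (the contributions $\Delta^I$ with $1\le |I|\le d-2$ are lost), and the corresponding terms are genuinely missing from the right-hand side of \eqref{Eq:Moricz2}. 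Indeed \eqref{Eq:Moricz2} as stated fails for $d\ge 3$: with $d=3$ and $\t=(a_1,a_2,b_3)$, the single increment $Z^H(a_1,a_2,b_3)-Z^H({\bf a})$ is symmetric $\alpha$-stable with scale parameter of order $a_1^{H_1}a_2^{H_2}(b_3-a_3)^{H_3}$, so its $\gamma$-th moment exceeds the right-hand side of \eqref{Eq:Moricz2} once $a_1,a_2$ are large and $b_1-a_1,\,b_2-a_2$ are small. So your closing remark that your bound agrees with \eqref{Eq:Moricz2} only for $d\le 2$ is exactly right, and the discrepancy is a defect of the lemma as stated (and of the paper's regrouping), not of your argument. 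For the only use made of the lemma here, in the proof of Theorem \ref{Thm:LFSS}, one has $b_j-a_j\asymp a_j$ for every $j$, so all $2^d-1$ terms are comparable and either form of the bound suffices.
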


\begin{proof}
Notice that (\ref{Eq:maxtail}) follows from \eqref{Eq:Moricz2} and the Markov inequality.
Moreover, for  $d = 1$, it is well known that \eqref{Eq:Moricz2}
holds (see \cite{Taka} or (2.8) in \cite{nane-xiao-zeleke}).
Hence, it suffices prove  \eqref{Eq:Moricz2} for $d \ge 2$.

The idea is to write  the increment $Z^H (\t) - Z ^H ({\bf a})$
as a sum of increments of (scaled) $(d-1, 1, \alpha)$-LFSS over intervals in the $d-1$-dimensional hyperplane
and to use the general bound for the maximal moment for the increments of LFSS over intervals established in \cite{ARX08}.
The difference is that $T$ was assumed to be in $[\eps,1]^d$ in \cite{ARX08}, while we allow $T$ to be anywhere
in $[\eps,\infty)^d$. So some careful modifications will have to be made.

Recall that, for any $\t \in T$, the increment of $Z^H$ over the rectangle $[{\bf a},\t]$, denoted by
$Z^H([{\bf a},\t])$, is defined as
\begin{equation}\label{increment-process}
Z^H([{\bf a},\t])=\sum_{{ \delta} \in \{0,1\}^d} (-1)^{d-\sum_i \delta_i}Z^H(\langle a_j+\delta_j(t_j-a_j)\rangle ).
\end{equation}
This corresponds to the measure of the set $[{\bf a},\t]$ by interpreting $Z^H$ as a signed measure defined
by $Z^H([\0,\t])=Z^H(\t)$ for all $\t\in \R_+^d$. (convention: $[0,t_j]:=[t_j,0]$ if $t_j<0$).

The proof of Lemma 20 in \cite[(5.21)]{ARX08} shows that for any $0 <\gamma < \alpha$,
\begin{equation}\label{Eq:Maximom}
\E \bigg(\sup_{\t \in T} \big|Z ^H \big([{\bf a},\t] \big)\big|^\gamma \bigg)
\le C\, \prod_{j=1}^d (b_j - a_j)^{H_j\gamma},
\end{equation}
where $C$ is a positive and finite constant that is independent of ${\bf a}$ and ${\bf b}$.

Observe that, in \eqref{increment-process},  the term corresponding to
$\delta=\langle 1 \rangle$ in the sum is $Z^H(\t)$, and since
$\sum_{\delta\in \{0,1\}^d} (-1)^{d-\sum_i \delta_i}=0$, we can write \eqref{increment-process} as
\begin{equation}\label{increment-1}
\begin{split}
Z^H([{\bf a},\t])&= \sum_{\delta\in \{0,1\}^d} (-1)^{d-\sum_i \delta_i}
\Big(Z^H(\langle a_j+\delta_j(t_j-a_j)\rangle)-Z^H({\bf a})\Big)\\
&= Z^H(\t)-Z^H({\bf a})+ \sum_{\delta\in \{0,1\}^d \setminus \langle 1\rangle}
(-1)^{d-\sum_i \delta_i}\Big(Z^H(\langle a_j+\delta_j (t_j-a_j)\rangle)-Z^H({\bf a})\Big).
\end{split}
\end{equation}
For every $\delta\in \{0,1\}^d\setminus \langle 1\rangle$, there is some
$k\in \{1,\,\ldots, d\}$ such that $\delta_k=0$. For any $\t \in \rr^{d}$, we denote by
$\widehat{\t}_k$ the vector $(t_1, \ldots, t_{k-1}, t_{k+1}, \ldots, t_d) \in \rr ^{d-1}$, and by
$(a_k, \widehat{\t}_k) $ the vector $(t_1, \ldots, t_{k-1}, a_k, t_{k+1}, \ldots, t_d) \in \rr ^{d}$.

The last sum in \eqref{increment-1} can be written as a sum of $d$ increments of
scaled $(d-1, 1, \alpha)$-linear fractional stable sheets. More precisely,
\begin{equation}\label{increment-2}
\begin{split}
&\sum_{\delta\in \{0,1\}^d \setminus \langle 1\rangle}
(-1)^{d-\sum_i \delta_i}\Big(Z^H(\langle a_j+\delta_j (t_j-a_j)\rangle)-Z^H({\bf a})\Big)\\
&= \sum_{k=1}^d \sum_{\stackrel{\delta \in \{0,1\}^{d-1} \setminus \langle 1\rangle}{\tiny{
\delta_k = 0}}}
(-1)^{d - \sum_{i} \delta_i}\Big(Z^H(\langle a_j+\delta_j (t_j-a_j)\rangle)-Z^H({\bf a})\Big)
\end{split}
\end{equation}
Notice that for each fixed $k$, the inner summation in the last line is the increment of
the process $- Z^H(a_k,\widehat{\t}_k)$ over the interval $[\widehat{\bf a}_k, \, \widehat{\bf b}_k]$.
By using the representation \eqref{Eq:Rep-LFSS}, we can verify that
\begin{equation}\label{Eq:Y}
\{- Z^H(a_k,\widehat{\t}_k), \t \in \R^d\} \stackrel{d}{=} \{a_k^{H_k}  Y^H(\s), \s \in \R^{d-1}\},
\end{equation}
where $Y^H = \{Y^H(\s), \s \in \R^{d-1}\}$ is a $(d-1, 1, \alpha)$-LFSS, whose kernel function
is similar to the function $g_H$ in (\ref{Eq:functh}), but without the factor in $t_k$ and a different
constant $\kappa$. Hence, similarly to (\ref{Eq:Maximom}), we have
 \begin{equation}\label{Eq:Maximom2}
\E \bigg(\sup_{\s \in [\widehat{\bf a}_k, \, \widehat{\bf b}_k]} \big|Y \big([\widehat{\bf a}_k,\s] \big)\big|^\gamma \bigg)
\le C\, \prod_{j\ne k} (b_j - a_j)^{H_j\gamma}.
\end{equation}
By combining \eqref{increment-1},  \eqref{Eq:Y}, (\ref{Eq:Maximom2}) and \eqref{Eq:Maximom}, we obtain
\begin{equation}\label{Eq:Moricz3}
\begin{split}
&\E\bigg(\sup_{t \in T} \big|Z^H(\t) - Z^H({\bf a})\big|^\gamma \bigg) \le  C\bigg(\prod_{j=1}^d (b_j - a_j)^{H_j\gamma}
+ \sum_{k=1}^d a_k^{H_k \gamma} \prod_{j \ne k} (b_j - a_j)^{H_j \gamma}\bigg).
\end{split}
\end{equation}
This proves \eqref{Eq:Moricz2}.
\end{proof}

\begin{theorem}\label{Thm:LFSS}
Assume that $1 < \alpha < 2$ and  $ 1/\alpha < H_j < 1$ for every $j
= 1, \ldots, d$. Then for any $\eps > 0$,
\begin{equation}\label{Eq:LFSS}
\lim_{\|\t\|\to \infty} \frac{Z^H(\t)} {\prod_{j=1}^d (1+ |t_j|^{H_j})
\big(\log (1 + |t_j|\big)^{\frac 1 \a + \eps}} = 0
\qquad {\rm a.s.}
\end{equation}
\end{theorem}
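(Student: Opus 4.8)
The plan is to deduce the SLLN for the continuous-parameter field $Z^H$ from the discrete rectangular SLLN of Theorem \ref{main-thm2} (applied with $p = \gamma$ for some fixed $\gamma \in (1,\alpha)$), by introducing a suitable array of increments. First I would pass from the real parameter $\t \in \R_+^d$ to the integer lattice: set $\xi(\n) = Z^H(\n) - Z^H(\langle n_1-1, n_2, \ldots, n_d\rangle) - \cdots$, i.e.\ the increment of $Z^H$ over the unit cube $[\n - {\bf 1}, \n]$ (the alternating sum as in \eqref{increment-process}), so that $Z^H(\n) = \sum_{\k \le \n} \xi(\k) = S(\0;\n)$ for $\n \in \N^d$. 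The normalizing function is $\varphi_j(x) = (1 + x^{H_j})(\log(1+x))^{1/\alpha + \eps}$, which is non-decreasing, tends to $\infty$, and satisfies $C_3 < \varphi_j(2x)/\varphi_j(x) < C_4$ for suitable constants $C_4 \ge C_3 > 1$ because of the polynomial leading term $x^{H_j}$ with $H_j > 0$. I then fix an integer $a \ge 2$ large enough that $C_3^{\lfloor \log_2 a\rfloor} > \max\{a, a 2^{p-1}\}$ with $p = \gamma$; this is possible since $C_3 > 1$ makes the left side grow like a power of $a$ strictly exceeding $1$, while the right side is linear in $a$.

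Next I would verify the moment condition \eqref{Eq:MomCon1}. By the operator-scaling/stationarity-of-increments structure and the maximal moment bound of Lemma \ref{Lem:stablesuptail}, one has for any $\m, \n \in \N_0^d$ (with $a_j^{n_j}$ replaced by general block sizes)
\[
\E\big(|S(\m;\, a^{n_1},\ldots,a^{n_d})|^\gamma\big) \le C_6 \bigg(\prod_{j=1}^d (a^{n_j})^{H_j\gamma} + \sum_{k=1}^d (m_k + a^{n_k})^{H_k\gamma} \prod_{j\ne k}(a^{n_j})^{H_j\gamma}\bigg),
\]
since $S(\m;\n)$ is exactly an increment of $Z^H$ of the form appearing in \eqref{Eq:Moricz2}, after using that the starting point lies in $[\eps,\infty)^d$ (indeed we may shift the whole lattice by $\eps$ at the cost of harmless constants). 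The difficulty here is that the right-hand side depends on $\m$ through the terms $(m_k + a^{n_k})^{H_k\gamma}$, which is \emph{not} uniformly bounded in $\m$; taking $\sup_\m$ destroys summability. I would resolve this exactly as in \cite{ARX08}: instead of a single dyadic-type array, cover $\R_+^d$ by a union of shifted blocks and handle separately, for each coordinate $k$, the region where $t_k$ is comparable to a fixed dyadic scale, so that the factor $a_k^{H_k\gamma}$ is absorbed into $\varphi_k$. This is the same mechanism that already appears in Step 2 of the proof of Theorem \ref{main-thm2}, where $A(k_1,\ldots,k_d)$ collects the maximal sums over all translates $l_j a^{n_j}$; the extra ``boundary'' terms $a_k^{H_k\gamma}\prod_{j\ne k}(\cdot)$ contribute a sum of lower-dimensional series which converge because of the logarithmic factor $(\log(1+x))^{1/\alpha+\eps}$ with $\eps > 0$.

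Granting the moment bound, the series $\sum_{\n} \sup_\m \E(|S(\m;a^{\n})|^\gamma) / \prod_j \varphi_j(a^{n_j})^\gamma$ is dominated, up to constants, by $\sum_{\n} \prod_{j} a^{n_j H_j \gamma} / \prod_j \big(a^{n_j H_j}(\log a^{n_j})^{1/\alpha+\eps}\big)^\gamma = \sum_{\n} \prod_j (n_j \log a)^{-\gamma(1/\alpha + \eps)}$, plus the lower-dimensional remainder terms of the same type; each factor is summable over $n_j \ge 1$ precisely when $\gamma(1/\alpha + \eps) > 1$, which holds since we may choose $\gamma$ close enough to $\alpha$ (here $1 < \alpha$, so $\alpha(1/\alpha) = 1$ and the $+\eps$ pushes it strictly above $1$). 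Theorem \ref{main-thm2} then yields $Z^H(\n)/\prod_j\varphi_j(n_j) \to 0$ a.s.\ along the lattice as $|\n| \to \infty$. Finally I would upgrade from lattice points to the full continuum: for $\t$ with $n_j \le t_j < n_j + 1$, write $Z^H(\t) = Z^H(\n) + (Z^H(\t) - Z^H(\n))$ and control the oscillation over the unit cube $[\n, \n + {\bf 1}]$ using \eqref{Eq:Moricz2} together with a Borel--Cantelli argument (the tail bound \eqref{Eq:maxtail} with $u$ a fixed small multiple of $\prod_j \varphi_j(n_j)$), so that $\sup_{\t \in [\n,\n+{\bf 1}]} |Z^H(\t) - Z^H(\n)| = o\big(\prod_j \varphi_j(n_j)\big)$ a.s.; combining the two pieces and replacing $t_j$ by $|t_j|$ by symmetry of $Z^H$ over the $2^d$ orthants gives \eqref{Eq:LFSS}. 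The main obstacle is the non-uniformity in $\m$ of the moment bound described above; everything else is bookkeeping with the explicit $\varphi_j$.
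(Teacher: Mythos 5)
Your overall architecture matches the paper's: discretize via the unit-cube increments $\xi(\n)=Z^H([\n,\n+\langle 1\rangle])$, apply Theorem \ref{main-thm2} with $\varphi_j(x)=(1+x^{H_j})(\log(1+x))^{1/\a+\eps}$, then pass to continuous time by a maximal inequality and Borel--Cantelli. But there is a genuine gap at the central step, the verification of the moment condition \eqref{Eq:MomCon1}. You bound $\E\big(|S(\m;a^{n_1},\ldots,a^{n_d})|^\gamma\big)$ by invoking Lemma \ref{Lem:stablesuptail}, treating $S(\m;\n)$ as a corner increment $Z^H(\t)-Z^H({\bf a})$; that bound carries the factors $(m_k+a^{n_k})^{H_k\gamma}$, you correctly observe that $\sup_{\m}$ then destroys summability, and you gesture at a covering argument ``as in \cite{ARX08}'' to repair it. That repair is not a proof: \eqref{Eq:MomCon1} requires the supremum over \emph{all} $\m\in\N_0^d$ for each fixed $\n$, and no regrouping of blocks removes an unbounded function of $\m$ from inside that supremum. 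The point you are missing is that $S(\m;\n)$ is not a corner increment but the \emph{rectangular} increment $Z^H([\m+\langle 1\rangle,\m+\n])$, i.e.\ the alternating sum \eqref{increment-process}; by translation invariance of the Lebesgue control measure in \eqref{Eq:Rep-LFSS}, this is a single symmetric $\a$-stable variable whose scale parameter is $\prod_j n_j^{H_j}$, independent of $\m$. Hence, as in the paper's \eqref{Eq:LFS1}, $\E\big(|S(\m;\langle a^{n_j}\rangle)|^p\big)=C_{\a,p}\,a^{p\sum_j n_jH_j}$ exactly, the supremum over $\m$ is trivial, and the series reduces to $\sum_{\n}\prod_j(\log(1+a^{n_j}))^{-p(1/\a+\eps)}<\infty$. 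The obstacle on which you spend most of your effort does not exist on the correct route; Lemma \ref{Lem:stablesuptail} is needed only for the continuum step.

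Second, your continuum step as stated fails. Over a unit cube $[\n,\n+{\bf 1}]$ the tail bound \eqref{Eq:maxtail} gives
\begin{equation*}
\P\bigg(\sup_{\t\in[\n,\n+{\bf 1}]}\big|Z^H(\t)-Z^H(\n)\big|\ge \eta\prod_{j=1}^d\varphi_j(n_j)\bigg)
\le C\eta^{-\gamma}\,\frac{1+\sum_{k=1}^d n_k^{H_k\gamma}}{\prod_{j=1}^d\varphi_j(n_j)^\gamma},
\end{equation*}
and the $k$-th term on the right is of order $(\log n_k)^{-\gamma(1/\a+\eps)}\prod_{j\ne k}n_j^{-H_j\gamma}(\log n_j)^{-\gamma(1/\a+\eps)}$, which is not summable over $n_k\in\N$ (no negative power of $\log n_k$ is summable). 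You must instead take the supremum over the exponential blocks $T_n=[a^{\n},a^{\n+\langle 1\rangle}]$, for which the bound collapses to $C\prod_j(n_j\log a)^{-\gamma(1/\a+\eps)}$ and Borel--Cantelli applies once $\gamma(1/\a+\eps)>1$; this is exactly what the paper does in \eqref{Eq:MaxIn4}--\eqref{t-limit1}.
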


\begin{proof} For simplicity we assume $\t \in (1, \infty)^d$ in (\ref{Eq:LFSS}).
Otherwise, the proof can be slightly modified to cover the cases where $\|\t\|\to \infty$
but $t_j \in (0, 1]$ for some $j \in \{1, \ldots, d\}$.


For any $\n \in \mathbb{N}_0^d$, let $\xi (\n) = Z^H([\n,\, \n + \langle 1\rangle ])$, which is the increment of
$Z^H$ over the unit cube with lower-left vertex $\n$. Then $\{\xi (\n), \n \in \N_0^d\}$ is a stationary
$\alpha$-stable random field.

Now let $\eps >0$ be a constant.  We take  $p \in (0, \alpha)$ such that
$p (\alpha^{-1} + \eps) > 1$. To apply Theorem \ref{main-thm2}, we take $\varphi_j (t_j) = (1+ |t_j|^{H_j})
\big(\log (1+ |t_j|)\big)^{\frac 1 \a + \eps}$ ($j = 1, \ldots, d$) and let $a \ge 2$ be an integer that satisfies
the condition in Theorem \ref{main-thm2}.

For any $\m \in \N_0^d$ and $\n \in \N^d$,  notice that the partial sum $S(\m, \n)$ of
$\{\xi (\n), \n \in \N_0^d\}$ over the rectangle
$(\m, \m + \n]$ equals $Z^H\big([\m+\langle 1\rangle , \m + \n]\big)$.
By making use of the representation (\ref{Eq:Rep-LFSS}), we derive (see e.g., Lemma 18 of \cite{ARX08})
that for any $\m \in \N_0^d$,
\begin{equation}\label{Eq:LFS1}
\begin{split}
\E \Big(\big|S(\m; \, \langle a^{n_j} \rangle)\big|^p \Big)
&= \E\Big( \big| Z^H([\m + \langle 1 \rangle, \, \m
+  \langle a^{n_j} \rangle])\big|^p \Big) \\
&= C_{\alpha, p} \, a^{p \sum_{j=1}^{d} n_j H_j},
\end{split}
\end{equation}
where $C_{\alpha, p} = \E\big(\big|Z^H(\langle 1 \rangle)\big|^p\big)$.
It follows from (\ref{Eq:LFS1}) that 
\begin{eqnarray*}
\sup_{\m \in \N_0^d} \E \Bigg(\frac{\big|S(\m; \, \langle a^{n_j} \rangle)\big|^p }
{ \prod_{j=1}^d\varphi_j (a^{n_j})^p}\Bigg)\le C_{\alpha, p}\prod_{j=1}^d  \frac{1}{\big(\log(1+a^{n_j})\big)^{p(\alpha^{-1}+ \eps)}}.
\end{eqnarray*}
This and the fact that $p (\alpha^{-1} + \eps) > 1$ imply
$$
 \sum_{ \n \in \N_0^d} \sup_{\m \in \N_0^d}\E \Bigg(\frac{\big|S(\m; \, \langle a^{n_j} \rangle)\big|^p }
{ \prod_{j=1}^d\varphi_j (a^{n_j})^p}\Bigg) < \infty.
$$
Hence by Theorem \ref{main-thm2}, we get,
\begin{eqnarray} \label{an-limit0}
\lim_{\|\n\|\to \infty}\frac{Z^H(\n) }{\prod_{j=1}^d \varphi_{j} (n_j)}
= \lim_{\|\n\|\to \infty}\frac{S(\0; \n)}{
\prod_{j=1}^d \varphi_{j} (n_j)}  =  0, \quad \hbox{a.s.}
\end{eqnarray}


\noindent Next we show that the last result holds for continuous time
$\t \in (1, \infty)^d$ as $\|\t\|\to \infty$. To this end, let
$T_n =\prod_{j=1}^d [a^{n_j}, a^{n_j+1}] = [a^\n,\, a^{\n + \langle 1 \rangle}]$ and let $\eta>0$ be any fixed constant.
We take $u = \eta \, \prod_{j=1}^d \varphi_j (a^{n_j})$.
Then (\ref{Eq:maxtail}) implies
\begin{equation}\label{Eq:MaxIn4}
\begin{split}
\P\bigg(\sup_{\t \in T_n} \big|Z^H (\t)- Z^H  (a^{\n}) \big| \geq
\eta \prod_{j=1}^d \varphi_j (a^{n_j})\bigg) &\leq
C \prod_{j=1}^d \bigg(\frac{ a^{n_j H_j} } {\varphi_j (a^{n_j})}\bigg)^{\gamma}   \\
&\leq  C \prod_{j=1}^d  \frac{1} {\big(n_j \log a \big)^{\gamma(\alpha^{-1}+\eps)}}.
\end{split}
\end{equation}
By taking $\gamma < \alpha$ such that $\gamma(\alpha^{-1}+\eps) >1$, we see that the
probabilities in \eqref{Eq:MaxIn4} are summable. Hence the Borel-Cantelli lemma yields
 \begin{equation}\label{t-limit1}
\limsup_{\|\n\| \rightarrow \infty} \frac{\sup_{\t \in T_n}
\big|Z^H(\t)-Z^H(a^{\n})\big|}{\prod_{j=1}^d \varphi_j (a^{n_j}) }
=0, \quad \hbox{a. s.}
\end{equation}
Finally,  for any $t \in (1, \infty)^d$ with $\|\t\|$ large, there is an
$\n \in \N_0^d$ such that $\t \in [a^{\n}, a^{\n + \langle 1 \rangle}]$.
Since
\[
\frac{\big|Z^H(\t)\big|}{\prod_{j=1}^d \varphi_j (t_j)}
\le \frac{\big|Z^H(a^{\n})\big|} {\prod_{j=1}^d \varphi_j (a^\n)}
+ \frac{\sup_{\t \in T_n} \big|Z^H(\t)-Z^H(a^{\n})\big|}{\prod_{j=1}^d \varphi_j (a^{n_j})},
\]
it is clear that (\ref{Eq:LFSS}) follows from (\ref{an-limit0}) and (\ref{t-limit1}). This proves Theorem \ref{Thm:LFSS}.
\end{proof}


Similar results holds for harmonizable fractional stable sheets 
and other fractional stable fields in \cite{Xiao10,Xiao08}. We leave 
the details to an interested reader. 

\end{document}